\newcommand{\NN}{\mathbb{N}}
\newcommand{\RR}{\mathbb{R}}
\newcommand{\MM}{\mathcal{M}}
\newcommand{\Ric}{\textup{Ric}}
\newcommand{\Vol}{\textup{Vol}}
\newcommand{\ud}{\textup{d}}
\newcommand{\HD}{\textup{HD}}
\newcommand{\Span}{{\tt{span}}}
\newtheorem{thm}{Theorem}[section]
\newtheorem{lem}[thm]{Lemma}
\title[Vector Diffusion Map]{Embedding Riemannian Manifolds by the Heat Kernel of the Connection Laplacian}
\author{
Hau-Tieng~Wu
}
\thanks{Department of Mathematics, University of Toronto, Toronto Ontario, Canada, email: hauwu@math.toronto.edu}
\date{}
\begin{document}
\maketitle

\begin{abstract}
Given a class of closed Riemannian manifolds with prescribed geometric conditions, we introduce an embedding of the manifolds into $\ell^2$ based on the heat kernel of the Connection Laplacian associated with the Levi-Civita connection on the tangent bundle. As a result, we can construct a distance in this class which leads to a pre-compactness theorem on the class under consideration.
\end{abstract}

\section{Introduction}

In \cite{berard}, the following class of closed Riemannian manifolds $\MM_{d,k,D}$ with prescribed geometric constrains are considered:
\begin{equation}
\MM_{d,k,D}=\{(M,g)| \dim(M)=d,\,\Ric(g)\geq (d-1)kg,\,\mbox{diam}(M)\leq D\},
\end{equation}
where $\Ric$ is the Ricci curvature and $\mbox{diam}$ is the diameter.
The authors embed $M\in \MM_{d,k,D}$ into the space $\ell^2$ of real-valued, square integrable series by considering the heat kernel of the Laplace-Beltrami operator of $M$. A distance on $\MM_{d,k,D}$, referred to as the {\em spectral distance} (SD), is then introduced based on the embedding so that the set of isometric classes in $\MM_{d,k,D}$ is precompact. Over the past decades many works in the manifold learning field benefit from this embedding scheme, for example, the eigenmaps \cite{belkin_niyogi:2003}, diffusion maps (DM) \cite{Coifman20065} and the manifold parameterizations \cite{jones2007,Jones_Maggioni_Schul:2010}. An important practical and theoretical question about how many eigenfunctions one needs to embed the manifold via DM has been answered in \cite{Bates:2014,Portegies:2013}. 

Recently, a new mathematical framework, referred to as {\em vector diffusion maps} (VDM), for organizing and analyzing massive high dimensional data sets, images and shapes was introduced in \cite{vdm}. In brief, VDM is a mathematical generalization of DM, which numerically is based on a generalization of graph Laplacian, referred to as {\em graph connection Laplacian} (GCL). 
While DM are based on the heat kernel of the Laplace-Beltrami operator over the manifold, VDM is based on the heat kernel of the connection Laplacian associated with the Levi-Civita connection on the tangent bundle of the manifold. The introduction of VDM was motivated by the problem of finding an efficient way to organize complex data sets, embed them in a low dimensional space, and interpolate and regress vector fields over the data. In particular, it equips the data with a metric, which we refer to as the {\em vector diffusion distance} (VDD). VDM has been applied to several different problems, ranging from the cryo-electron microscopy problem \cite{hadanisinger2011,vdmapp2011,Zhao_Singer:2014} to graph realization problems \cite{Cucuringu_Lipman_Singer:2012,Cucuringu_Singer_Cowburn:2012} and modern light source imaging technique \cite{Alexeev_Bandeira_Fickus_Mixon:2013,Marchesini_Tu_Wu:2014}. The application of VDM to the cryo-electron microscopy problem, which is aimed to reconstruct the three dimensional geometric structure of the macromolecule, provides a better organization of the given noisy projection images, and hence a better reconstruction result \cite{hadanisinger2011,vdmapp2011,Zhao_Singer:2014}. In addition, the GCL can be slightly modified to determine the orientability of a manifold and obtain its orientable double covering if the manifold is non-orientable \cite{singer_wu:2011}. 

In this paper, we consider the same class of closed Riemannian manifolds $\MM_{d,k,D}$ and focus on the connection Laplacian associated with the Levi-Civita connection on the tangent bundle. We analyze how the VDM embeds the manifold $M\in \MM_{d,k,D}$ into $\ell^2$ based on the heat kernel of the connection Laplacian of the tangent bundle. Based on the VDM and VDD, we introduce a new distance on $\MM_{d,k,D}$, referred to as {\em vector spectral distance} (VSD), which leads to the pre-compactness of the set $\MM_{d,k,D}$. 

\subsection{Summary of the work and organization of the paper} 
The results of this paper are organized in the following way. In Section \ref{section:background} the background material and notations are provided. 

In Section \ref{section:vdm}, the VDM and VDD are defined  and we discuss the embedding property of VDM in Theorem \ref{Vtembedding} and the local geodesic estimation property of VDD in Theorem \ref{thm:geod}. Indeed, VDM is a diffeomorphic embedding of a manifold $M\in\MM_{d,k,D}$ into the Hilbert space $\ell^2$, and if  $x,y\in M$ are close enough in the sense of geodesic distance, then the VDD of these two points are closely related to the geodesic distance. 

In Section \ref{section:vsd} we define VSD in the manifold set $\MM_{d,k,D}$. 
The key ingredients in this section are the generalized Kato's type inequality comparing the trace of the heat kernel of the Laplace-Beltrami operator and the trace of the heat kernel of the connection Laplacian and a nice isoperimetric inequality for heat kernel comparisons. In brief, although the behavior of the heat kernel of the connection Laplacian might be intricate, we may control its trace simply by the trace of the heat kernel of the Laplace-Beltrami operator. With these key ingredients, in Theorem \ref{mainthm} we show that the newly defined metric, the VSD, is really a distance between isometry classes of Riemannian manifolds in $\MM_{d,k,D}$. In Section \ref{section:precompact}, with the help of the VSD, the pre-compactness of the manifold set $\MM_{d,k,D}$ is derived in Theorem \ref{Theorem:Precompactness} from the Rellich's Theorem.  
To show this Theorem, we also need the following Lemma:
\begin{lem}\cite[Lemma 15]{berard}\label{precompactlemma}
Let $(E,\delta)$ be a metric space. Let $\mathcal{F}(E)$ denote the set of non-empty closed subsets of $E$, equipped with the Hausdorff distance
$h_\delta$ associated with $\delta$. If the metric space $(E,\delta)$ is precompact, so is the metric space
$(\mathcal{F}(E),h_\delta)$.
\end{lem}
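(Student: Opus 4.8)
The plan is to deduce total boundedness of $(\mathcal{F}(E),h_\delta)$ directly from total boundedness of $(E,\delta)$, by building an explicit finite $\veps$-net for $\mathcal{F}(E)$ out of the finite subsets of an $\veps/2$-net for $E$. Recall that precompactness of a metric space means that for every $\veps>0$ it admits a finite $\veps$-net. So fix $\veps>0$; since $(E,\delta)$ is precompact, pick a finite set $N=\{x_1,\dots,x_n\}\subseteq E$ such that every point of $E$ lies within distance $\veps/2$ of some $x_i$. Let $\mathcal{N}$ be the (finite) collection of all non-empty subsets of $N$. Each element of $\mathcal{N}$ is a finite, hence closed, non-empty subset of $E$, so $\mathcal{N}\subseteq\mathcal{F}(E)$.

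Next I would check that $\mathcal{N}$ is an $\veps$-net in $(\mathcal{F}(E),h_\delta)$. Given any $F\in\mathcal{F}(E)$, put $S:=\{x_i\in N:\ \operatorname{dist}_\delta(x_i,F)<\veps/2\}$, where $\operatorname{dist}_\delta(x,F):=\inf_{p\in F}\delta(x,p)$. Since $F\neq\emptyset$, choosing $p\in F$ and $x_i\in N$ with $\delta(p,x_i)<\veps/2$ shows $x_i\in S$, so $S\in\mathcal{N}$. Now estimate the two one-sided parts of the Hausdorff distance: by construction $\operatorname{dist}_\delta(x_i,F)<\veps/2$ for every $x_i\in S$; and for every $p\in F$ there is an $x_i\in N$ with $\delta(p,x_i)<\veps/2$, and this $x_i$ necessarily lies in $S$, so $\operatorname{dist}_\delta(p,S)<\veps/2$. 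Hence $h_\delta(F,S)\le\veps/2<\veps$. As $\veps>0$ was arbitrary, $(\mathcal{F}(E),h_\delta)$ is precompact.

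I expect no serious obstacle: the argument is a finite combinatorial reduction, and a single $\veps/2$-net simultaneously controls both directions of the Hausdorff distance. The only care needed is bookkeeping — verifying that $S$ is non-empty so that it really belongs to $\mathcal{F}(E)$, that finite subsets of $E$ are closed so that $\mathcal{N}\subseteq\mathcal{F}(E)$, and that the strict inequality $\operatorname{dist}_\delta(x_i,F)<\veps/2$ can be realized by an actual point $p\in F$ with $\delta(x_i,p)<\veps/2$ (the infimum need not be attained, but the strict bound suffices). The degenerate case $E=\emptyset$ is vacuous, since then $\mathcal{F}(E)=\emptyset$ is trivially precompact.
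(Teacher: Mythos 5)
Your argument is correct and complete: taking all non-empty subsets of a finite $\varepsilon/2$-net of $E$ and matching a closed set $F$ with the set $S$ of net points within $\varepsilon/2$ of $F$ bounds both one-sided parts of the Hausdorff distance by $\varepsilon/2$, and you handle the needed bookkeeping (non-emptiness of $S$, closedness of finite sets, the degenerate case). Note that the paper itself gives no proof of this lemma --- it is quoted from Lemma 15 of B\'erard--Besson--Gallot --- so there is no internal argument to compare against; yours is the standard direct total-boundedness proof of that cited result.
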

In fact, we view the VDM of a given manifold in $\MM_{d,k,D}$ as a point in the set consisting of embeddings of all manifolds in $\MM_{d,k,D}$, and then apply Lemma \ref{precompactlemma} to show the pre-compactness of $\MM_{d,k,D}$ with related to the VSD. 
The main theorems are summarized here for the reader's convenience:
\begin{enumerate}
\item [$\bullet$] Theorem 4.6: For any fixed $t>0$, $d_t$, the VSD defined in (\ref{VtM}), is a distance between isometry classes of Riemannian manifolds in $\MM_{d,k,D}$. In particular, two Riemannian manifolds $(M,g),(M',g')\in \MM_{d,k,D}$ satisfy $d_{t}(M, M')=0$ if and only if $M$ and $M'$ are isometric.
\item [$\bullet$] Theorem 5.1: For any $t>0$, the space of the isometry classes in $\MM_{d,k,D}$ is $d_t$-precompact.
\end{enumerate}

In Section \ref{section:manifoldLearning}, we discuss the application of VDM and this new pre-compactness result from the viewpoint of manifold learning. 

\section{Background material}\label{section:background}
Let $(M,g)$ be a closed Riemannian manifold and $TM$ the tangent bundle. Denote $C^\infty(TM)$ the smooth vector fields and $L^2(TM)$ the vector fields satisfying
\begin{equation}
\int_M \langle X,X\rangle(x) \ud V(x)\leq \infty,
\end{equation}
where $\ud V$ is the volume form associated with $g$ and $\langle X,X\rangle(x):=g(X(x),X(x))$. Denote $\nabla$ the Levi-Civita connection of $\MM$ and $P_{x,y}$ the parallel transport from $y$ to $x$ via the geodesic linking them. Denote $\Delta^{\texttt{TM}}$ the connection Laplacian associated with $\nabla$ on the tangent bundle $TM$ \cite{gilkey}. The connection Laplacian $\Delta^{\texttt{TM}}$ is a self-adjoint, second order elliptic operator \cite{gilkey}. From the classical elliptic theory \cite{gilkey} we know that the heat semigroup, $e^{t\Delta^{\texttt{TM}}}$, $t>0$, with the infinitesimal generator $\Delta^{\texttt{TM}}$ is a family of self-adjoint operators with the heat kernel $k_{TM}(t,x,y)$ so that
\begin{equation}
e^{t\Delta^{\texttt{TM}}}X(x)=\int_ M k_{TM}(t,x,y)X(y)\ud V(y).
\end{equation}
The heat kernel $k_{TM}(t,x,y)$ is smooth in $x$ and $y$ and analytic in $t$ \cite{gilkey}.

It is well known \cite{gilkey} that the spectrum of $\Delta^{\texttt{TM}}$ is discrete inside $\RR^-$, the non-positive real numbers, and the only possible accumulation point is $-\infty$. We will denote the spectrum of $\Delta^{\texttt{TM}}$ as $\{-\lambda_k\}_{k=1}^\infty$, where $0=\lambda_0<\lambda_1\leq\lambda_2\ldots$, and its eigen-vector fields as $\{X_k\}_{k=1}^\infty$. Notice that $\lambda_0$ may not exist due to the topological obstruction. For example, we can not find a nowhere non-vanishing vector field on $S^2$. In other words, $\Delta^{\texttt{TM}} X_k=-\lambda_kX_k$ for all $k=1,2,\ldots$. It is also well known \cite{gilkey} that $\{X_k\}_{k=1}^\infty$ form an orthonormal basis for $L^2(T M)$. Denote the heat kernel of $\Delta^{\texttt{TM}}$ by $k_{TM}(t,x,y)$, which can be expressed as \cite{gilkey}:
\begin{equation}
k_{TM}(t,x,y)=\sum_{n=1}^\infty e^{-\lambda_n t}X_n(x)\otimes\overline{X_n(y)}.
\end{equation}

A calculation of the Hilbert-Schmidt norm of the heat kernel at $(t,x,y)$ gives
\begin{eqnarray}
\label{KF}
\|k_{TM}(t,x,y)\|_{HS}^2 &=& \operatorname{Tr}\left[k_{TM}(t,x,y)\overline{k_{TM}(t,x,y)} \right] \nonumber \\
&=& \sum_{n,m=1}^\infty e^{-(\lambda_n+\lambda_m)t} \langle X_n(x), X_m(x) \rangle \overline{\langle X_n(y), X_m(y) \rangle}.
\end{eqnarray}

On the other hand, the classical elliptic theory \cite{gilkey} allows us to decompose $L^2(T M)$ as $L^2(T M)=\oplus_{k=1}^\infty E_k$, where $E_k$ is the eigenspace of $\Delta^{\texttt{TM}}$ corresponding to the eigenvalue $-\lambda_k$. Denote by $m(-\lambda_k)$ the multiplicity of $-\lambda_k$. It is also well known that $m(-\lambda_k)$ is finite. Denote $\mathcal{B}(E_k)$ the set of bases of $E_k$, which is identical to the orthogonal group $O(m(-\lambda_k))$. Denote the set of the corresponding orthonormal bases of $L^2(T M)$ by
\begin{equation}
\mathcal{B}(M,g)=\Pi^\infty_{k=1}\mathcal{B}(E_k).
\end{equation}
By Tychonoff's theorem, we know $\mathcal{B}(M,g)$ is compact since $O(m(-\lambda_k))$ is compact for all $k\in \NN$. Also note that the dot products $\langle X_n(x), X_m(x) \rangle$, where $n,m\in\NN$, are invariant to the choice of basis for $T_x M$.

\section{Vector Diffusion Maps}\label{section:vdm}
Based on these observations, given $a\in \mathcal{B}(M,g)$ and $t>0$, the authors in \cite{vdm} define the VDM $V^a_t$ which maps $x\in M$ to the Hilbert space $\ell^2$ by\footnote{Note that the basis $a$ and the volume of $M$ are not taken into consideration in the definition of the VDM in \cite{vdm}. To prove the precompactness theorem, we need to take them into consideration.}:
\begin{equation}\label{Vt}
V^a_t:x \mapsto \Vol(M)\left( e^{-(\lambda_n + \lambda_m)t/2} \langle X_n(x), X_m(x) \rangle \right)_{n,m=1}^\infty,
\end{equation}
where $a=\{X_n\}_{n=1}^\infty$. A direct calculation shows that
\begin{equation}
\|k_{TM}(t,x,y)\|_{HS}^2 = \frac{1}{\Vol(M)^2}\langle V^a_t(x), V^a_t(y)\rangle_{\ell^2}.
\end{equation}
Fix $a\in \mathcal{B}(M,g)$. For all $t>0$, the following Theorem states that the VDM $V^a_t$ is an embedding of the compact Riemannian manifold $M$ into $\ell^2$. The proof of the theorem is given in \cite[Theorem 8.1]{vdm}.
\begin{thm}\label{Vtembedding}
Given a $d$-dim closed Riemannian manifold $(M,g)$ and an orthonormal basis
$a=\{X_k\}_{k=1}^\infty$ of $L^2(T M)$ composed of the eigenvector-fields of the connection
Laplace $\Delta^{\texttt{TM}}$, then for any $t>0$, the VDM $V^a_t$ is a diffeomorphic
embedding of $ M$ into $\ell^2$.
\end{thm}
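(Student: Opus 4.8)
The plan is to follow the classical Bérard–Besson–Gallot strategy adapted to the tangent bundle, establishing three things in turn: that $V^a_t$ is well-defined into $\ell^2$, that it is an immersion, and that it is injective; smoothness and the embedding property will then follow from compactness of $M$. First I would check that $V^a_t(x)$ lies in $\ell^2$: by the pointwise heat-kernel expansion the series $\sum_{n,m} e^{-(\lambda_n+\lambda_m)t}\langle X_n(x),X_m(x)\rangle^2$ equals $\Vol(M)^{-2}\|V^a_t(x)\|_{\ell^2}^2$, and this is controlled by $\|k_{TM}(t,x,x)\|_{HS}^2$, which is finite and depends smoothly on $x$ by the elliptic theory quoted in Section~\ref{section:background}. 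Uniform convergence on $M$ (from Gaussian-type off-diagonal bounds, or simply from smoothness of the kernel and compactness) gives that $x\mapsto V^a_t(x)$ is smooth as a map into $\ell^2$.

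Next I would prove injectivity. Suppose $V^a_t(x)=V^a_t(y)$. Then for every $n,m$ we have $\langle X_n(x),X_m(x)\rangle = \langle X_n(y),X_m(y)\rangle$, hence (summing against $e^{-(\lambda_n+\lambda_m)s}$ for arbitrary $s>0$) $\|k_{TM}(s,x,\cdot)\|$ and $\|k_{TM}(s,y,\cdot)\|$ have the same Gram data. The cleanest route is to use the reproducing property: the equality of all the bracketed quantities implies $k_{TM}(s,x,z)$ and $k_{TM}(s,y,z)$, as sections over $z$, have equal pointwise Hilbert–Schmidt norms against any fixed section, and more strongly that $\|k_{TM}(s,x,\cdot)-P\,k_{TM}(s,y,\cdot)\|_{L^2}^2$ vanishes for a suitable bundle isometry $P$; letting $s\to 0$ and using that $k_{TM}(s,x,\cdot)$ concentrates near $x$ forces $x=y$. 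Alternatively one reduces to the scalar case: $\tr\langle X_n(x),X_n(x)\rangle$-type combinations recover the scalar heat trace data $p(t,x,x)$ of the Laplace–Beltrami operator along the diagonal and its derivatives, and injectivity of the scalar heat-kernel embedding of \cite{berard} can be invoked.

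For the immersion property I would differentiate: for $v\in T_xM$, $(dV^a_t)_x(v)$ has components $\Vol(M)\,e^{-(\lambda_n+\lambda_m)t/2}\big(\langle \nabla_v X_n, X_m\rangle + \langle X_n, \nabla_v X_m\rangle\big)(x)$. If this vanishes for all $n,m$, then for every pair of sections the corresponding derivative of the HS-kernel-norm vanishes; combined with the spanning property of the eigenvector-fields (the $X_n(x)$ span $T_xM$, since at a point where they failed to span one could produce a nonzero $L^2$ section orthogonal to all of them) one deduces $\nabla_v X_n(x)=0$ for all $n$ in a way incompatible with $v\neq 0$ — concretely, one picks $n$ with $X_n(x)\neq 0$ and uses a second eigenvector-field direction to detect the derivative. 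Finiteness of each eigenspace and local uniform convergence of the differentiated series (again from elliptic estimates) make these manipulations legitimate.

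The main obstacle I anticipate is the immersion step, specifically extracting nondegeneracy of $(dV^a_t)_x$ from the vanishing of all the symmetrized inner products $\langle \nabla_v X_n, X_m\rangle + \langle X_n, \nabla_v X_m\rangle$: unlike the scalar case, where one directly differentiates $p(t,x,x)$, here one only sees a symmetric bilinear combination, so the argument must exploit that the full collection $\{X_n(x)\}$ together with their covariant derivatives carries enough jet information — essentially a parametrix/Minakshisundaram–Pleijel expansion of $k_{TM}(t,x,y)$ near the diagonal, whose leading term is $(4\pi t)^{-d/2}e^{-d(x,y)^2/4t}P_{x,y}$, showing the short-time behaviour recovers both the metric and the connection. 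I would lean on that asymptotic expansion, which also streamlines injectivity, rather than on bare series manipulations; since the theorem is quoted from \cite[Theorem 8.1]{vdm}, the write-up can legitimately defer the heavier estimates to that reference while recording the structure of the argument here.
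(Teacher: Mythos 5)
The paper itself offers no proof of this theorem: it is quoted verbatim from \cite[Theorem 8.1]{vdm}, so there is no in-paper argument to compare against step by step. Judged on its own merits, your outline follows the right strategy and is essentially the heat-kernel argument of the cited reference: well-definedness from $\|V^a_t(x)\|^2_{\ell^2}=\Vol(M)^2\|k_{TM}(t,x,x)\|^2_{HS}$, smoothness from elliptic estimates and compactness, injectivity from the fact that $V^a_t(x)=V^a_t(y)$ forces $\langle X_n(x),X_m(x)\rangle=\langle X_n(y),X_m(y)\rangle$ for all $n,m$, hence an isometry $P$ between the spans with $k_{TM}(s,\cdot,x)w=k_{TM}(s,\cdot,y)Pw$ in $L^2$ for every $s>0$, which upon letting $s\to0$ and testing against a section supported near $x$ forces $x=y$. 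Two caveats. First, your ``alternative'' injectivity route is not valid: $\sum_n e^{-\lambda_n t}\langle X_n(x),X_n(x)\rangle$ does \emph{not} recover the scalar heat kernel $k_M(t,x,x)$; the two are related only by inequalities (the Kato-type bound (\ref{kato}) and $\|k_{TM}(t,x,x)\|_{HS}\leq k_M(t,x,x)$ used in Lemma \ref{evlemma}), so the scalar embedding of \cite{berard} cannot simply be invoked. Likewise, your justification that the $X_n(x)$ span $T_xM$ (``produce a nonzero $L^2$ section orthogonal to all of them'') confuses pointwise with $L^2$ orthogonality; the correct argument is that $k_{TM}(t,x,x)v=\sum_n e^{-\lambda_n t}\langle X_n(x),v\rangle X_n(x)$ would vanish for all $t$ while $k_{TM}(t,x,x)\to \mathrm{id}_{T_xM}$ as $t\to 0$.

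Second, the immersion step that you flag as the main obstacle can be closed more cheaply than by the full parametrix expansion, using machinery already in this paper. For fixed $t>0$, since the exponential weights are strictly positive, $(dV^a_t)_x(v)=0$ is equivalent to $v\langle X_n,X_m\rangle(x)=0$ for all $n,m$ (in particular the condition is independent of $t$, so immersion for one $t$ gives it for all $t$); and Lemma \ref{expandatangentplane} --- whose proof depends only on Lemma \ref{newL2basis} and not on Theorem \ref{Vtembedding}, so there is no circularity --- states precisely that the gradients $\nabla\langle X_{n_i},X_{m_i}\rangle(x)$ span $T_xM$, whence $v=0$. Your proposed route via the near-diagonal expansion with leading term $(4\pi t)^{-d/2}e^{-d(x,y)^2/4t}P_{x,y}$ is also viable and closer in spirit to \cite{vdm}, but as written your immersion paragraph (``pick $n$ with $X_n(x)\neq 0$ and use a second eigenvector-field direction'') is a plan rather than a proof, and it is exactly the point where the spanning statement above must be supplied.
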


It is Theorem \ref{Vtembedding} that allows the authors to define the VDD between $x,y\in  M$, denoted as $d_{\text{VDM},t}(x,y)$, in \cite{vdm}:
\begin{equation}
\label{dd}
d_{\text{VDM},t}(x,y) := \|V^a_t(x)-V^a_t(y)\|_{\ell^2},
\end{equation}
which is clearly a distance function over $M$. We mention that by the following expansion
\begin{equation}\label{vddexpansion}
\begin{split}
&~d^2_{\text{VDM},t}(x,y)= \|V^a_t(x)-V^a_t(y)\|^2_{\ell^2}\\
=&~ \Vol(M)^2\sum_{n,m=1}^\infty e^{-(\lambda_n + \lambda_m)t} (\langle X_n(x), X_m(x) \rangle-\langle X_n(y), X_m(y) \rangle)^2\\
=&~\Vol(M)^2\big[\operatorname{Tr}(k_{TM}(t,x,x)k_{TM}(t,x,x)^*)+\\
&~\operatorname{Tr}(k_{TM}(t,y,y)k_{TM}(t,y,y)^*)-2\operatorname{Tr}(k_{TM}(t,x,y)k_{TM}(t,x,y)^*)\big],
\end{split}
\end{equation}
the defined VDD $d_{\text{VDM},t}$ does not depend on the choice of the basis $a$. The following theorem shows that in this asymptotic limit the vector diffusion distance behaves like the geodesic distance. The proof of the theorem is given in \cite[Theorem 8.2]{vdm}.
\begin{thm}\label{thm:geod}
Let $(M,g)$ be a smooth $d$-dim closed Riemannian manifold. Suppose $x,y\in  M$ so that $x=\exp_yv$, where $v\in T_y M$. For any $t>0$, when $\|v\|^2\ll t\ll 1$ we have the following asymptotic expansion of the VDD:
\begin{align}
d^2_{\textup{VDM},t}(x,y)= d\Vol(M)^2(4\pi)^{-d}\frac{\|v\|^2}{t^{d+1}}+O(t^{-d}\|v\|^2).
\end{align}
\end{thm}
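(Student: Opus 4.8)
The plan is to compute the asymptotic expansion of $d^2_{\textup{VDM},t}(x,y)$ by inserting the short-time parametrix expansion of the connection-Laplacian heat kernel into the expression \eqref{vddexpansion}, which rewrites $d^2_{\textup{VDM},t}$ entirely in terms of traces of $k_{TM}(t,\cdot,\cdot)$ and its conjugate. Since everything is basis-independent, I may work in a geodesic normal coordinate chart centered at $y$, with a synchronous (parallel) frame for $TM$ along the radial geodesics, so that the parallel transport $P_{x,y}$ becomes the identity matrix in this frame and the curvature corrections appear explicitly in the coefficients of the parametrix.

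The key steps, in order, are as follows. \textbf{Step 1:} Recall the Minakshisundaram–Pleijel–type expansion for the connection Laplacian on the tangent bundle: for $x=\exp_y v$ with $\|v\|$ small compared to $t$,
\begin{equation*}
k_{TM}(t,x,y) = (4\pi t)^{-d/2} e^{-\|v\|^2/4t}\bigl(P_{x,y} + t\,\Phi_1(x,y) + O(t^2)\bigr),
\end{equation*}
where $P_{x,y}\colon T_yM\to T_xM$ is parallel transport and $\Phi_1$ is the first Seeley–DeWitt coefficient (an endomorphism built from curvature); on the diagonal, $k_{TM}(t,x,x) = (4\pi t)^{-d/2}(\mathrm{Id} + t\,\Phi_1(x,x) + O(t^2))$. \textbf{Step 2:} Form the three trace terms in \eqref{vddexpansion}. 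The two diagonal terms contribute $\Vol(M)^2 \cdot 2\, d\,(4\pi t)^{-d}\,(1 + O(t))$, since $\operatorname{Tr}(\mathrm{Id}\cdot\mathrm{Id}^*) = d$ on each $d$-dimensional fiber (here the curvature corrections at $x$ and $y$ only contribute to the $O(t^{-d})$ remainder after multiplication by $\|v\|^2$, or cancel in the leading order). \textbf{Step 3:} Expand the off-diagonal term $\operatorname{Tr}(k_{TM}(t,x,y)k_{TM}(t,x,y)^*)$. The scalar prefactor gives $(4\pi t)^{-d} e^{-\|v\|^2/2t}$, and since $P_{x,y}$ is an isometry, $\operatorname{Tr}(P_{x,y}P_{x,y}^*) = d$; the correction endomorphism again lands in the remainder. \textbf{Step 4:} Assemble: $d^2_{\textup{VDM},t}(x,y) = \Vol(M)^2\,d\,(4\pi t)^{-d}\bigl(2 - 2 e^{-\|v\|^2/2t}\bigr) + (\text{lower order})$. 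Using $1 - e^{-\|v\|^2/2t} = \tfrac{\|v\|^2}{2t} + O(\|v\|^4/t^2)$ and the hypothesis $\|v\|^2 \ll t$, the bracket becomes $\tfrac{\|v\|^2}{t} + O(\|v\|^4/t^2)$, which yields the claimed leading term $d\,\Vol(M)^2 (4\pi)^{-d}\,\|v\|^2/t^{d+1}$, with all remaining contributions (the $t^{-d/2}\cdot t$ corrections from $\Phi_1$, the $\|v\|^4/t^2$ term, and error terms in the parametrix) absorbed into $O(t^{-d}\|v\|^2)$.

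The main obstacle is Step 3 together with the careful bookkeeping of remainders: I must justify that the Seeley–DeWitt correction terms $\Phi_1$, after being multiplied by the Gaussian factor and by the overall $\|v\|^2$ coming from the difference of exponentials, genuinely contribute only at order $t^{-d}\|v\|^2$ and not at the leading order $t^{-d-1}\|v\|^2$ — equivalently, that there is no curvature term competing with the claimed main term. This requires tracking that the diagonal and off-diagonal curvature contributions enter the bracket $2 - 2e^{-\|v\|^2/2t}$ only through relative factors of $t$ (not $1/t$), so that the curvature-induced pieces are smaller by a factor of $t$ relative to $\|v\|^2/t^{d+1}$, hence of size $O(t^{-d}\|v\|^2)$ in the regime $\|v\|^2\ll t\ll 1$. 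A secondary technical point is the uniformity of the parametrix remainder $O(t^2)$ in $x$ near $y$, which is standard for a fixed smooth closed manifold and follows from the classical construction of the heat parametrix for the connection Laplacian.
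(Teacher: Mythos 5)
The paper does not actually prove Theorem \ref{thm:geod}; it cites \cite[Theorem 8.2]{vdm}, and the proof there follows exactly the route you propose: insert the short-time parametrix of $k_{TM}$ into the trace identity (\ref{vddexpansion}), use that parallel transport is an isometry so $\operatorname{Tr}(P_{x,y}P_{x,y}^*)=d$, and expand the Gaussian factor. Your identification of the leading term $d\Vol(M)^2(4\pi)^{-d}\|v\|^2/t^{d+1}$ is correct and is obtained the same way as in the cited source.

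However, the step you yourself flag as ``the main obstacle'' is genuinely missing, and the naive bookkeeping you sketch does not deliver it. Two concrete points. First, the off-diagonal leading parametrix coefficient is $\Theta^{-1/2}(x,y)P_{x,y}$ (Van Vleck density factor), not $P_{x,y}$; this is harmless because $\Theta^{-1}=1+O(\|v\|^2)$, but it has to be accounted for. Second, and more seriously, the $\Phi_1$-corrections enter as $(4\pi t)^{-d}\,t\,\bigl[\phi(x,x)+\phi(y,y)-2e^{-\|v\|^2/2t}\phi(x,y)\bigr]$ for a smooth function $\phi$ built from $\operatorname{Tr}(\Phi_1 P^*+P\Phi_1^*)$; for a general smooth $\phi$ this is only $O(t^{-d+1}\|v\|)$, and likewise a naked parametrix remainder $(4\pi t)^{-d}O(t^2)$ is of size $t^{-d+2}$ --- neither is $O(t^{-d}\|v\|^2)$ in the part of the regime where $\|v\|\lesssim t$, so the assertion that these terms ``land in the remainder'' is not automatic. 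What closes the argument is a cancellation you never invoke: since $k_{TM}(t,x,y)=k_{TM}(t,y,x)^*$, the function $\|k_{TM}(t,x,y)\|_{HS}^2$ is symmetric in $(x,y)$, hence each coefficient $\phi$ in its expansion is symmetric and the second difference $\phi(x,x)+\phi(y,y)-2\phi(x,y)$ vanishes to second order, i.e.\ is $O(\|v\|^2)$, while the residual piece $2\phi(x,y)\bigl(1-e^{-\|v\|^2/2t}\bigr)=O(\|v\|^2/t)$; the same second-difference argument must also be applied to the parametrix remainder, which therefore needs uniform $C^2$-control in $(x,y)$, not just a pointwise $O(t^2)$ bound. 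Adding this symmetry/second-difference argument (or an explicit computation of the relevant coefficients) is necessary for your proof to be complete. A smaller point: the Gaussian remainder you discard is of size $t^{-d-2}\|v\|^4$, which is $O(t^{-d}\|v\|^2)$ only when $\|v\|=O(t)$; under the hypothesis $\|v\|^2\ll t$ alone it can exceed $t^{-d}\|v\|^2$ (while still being $o$ of the main term), so your claim that it is absorbed needs either the stronger regime or a separately recorded error term --- an imprecision admittedly inherited from the statement itself.
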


\section{Vector Spectral Distances}\label{section:vsd}

In this section and the next, we show that based on the VDM $V^a_t$, we can define a family of VSD $d_t$, $t>0$, on the space of the isometry classes in $\MM_{d,k,D}$ so that for any $t>0$ the space of the isometry classes in $\MM_{d,k,D}$ is $d_t$-precompact.

Denote the Laplace-Beltrami operator over $(M,g)$ by $\Delta_ M$ and its eigenvalues and eigenfunctions by $-\mu_k$ and $\phi_k$, where $k\in\{0\}\cup\NN$, that is, $\Delta_ M \phi_k=-\mu_k\phi_k$, so that $\mu_0=0<\mu_1\leq\mu_2\ldots$. Define the following partition functions
\begin{equation}
Z_{TM}(t):=\sum_{j=1}^\infty e^{-\lambda_jt}
\end{equation}
and 
\begin{equation}
Z_M(t):=\sum_{j=0}^\infty e^{-\mu_jt},
\end{equation}
which are related by the following generalized Kato's type inequality \cite[p. 135]{berard2}:
\begin{equation}\label{kato}
Z_{TM}(t)\leq dZ_M(t)\mbox{ for all }t> 0.
\end{equation}
Then recall the following result:
\begin{thm}\cite[p.108 C.26]{berard2}\label{thm1}
Let $(M,g)$ be an $d$-dimensional closed Riemannian manifold. Define
\begin{equation}
r_{min}(M)=\inf\{\Ric(v,v): \|v\|=1\}
\end{equation}
and the diameter of $M$ by $D(M)$. If $(M,g)$ satisfies $r_{min}(M)D(M)^2\geq (d-1)\epsilon\alpha^2$ for $\epsilon\in\{-1,0,1\}$ and $\alpha>0$, then
\begin{equation}\label{compareheatkernel}
\Vol(M)k_M(t,x,x)\leq \Vol(S^d(R))k_{S^d}(t,y,y)=Z_{S^d(R)}(t)=Z_{S^d(1)}(t/R^2),
\end{equation}
where $y\in S^d(R)$, $S^d(R)$ is the standard $d$-dim sphere in $\RR^{d+1}$ with center $0$ and radius $R$, $R=D(M)/a(d,\epsilon,\alpha)$ and
\begin{equation}
a(d,\epsilon,\alpha)=\left\{
\begin{array}{lcl}
\alpha \omega^{1/d}_d\left(2\int^{\alpha/2}_0 \cos^{d-1}(t)\ud t\right)^{-1/d}& \mbox{ if }&\epsilon=1\\
(1+d\omega_d)^{1/d}-1 & \mbox{ if }&\epsilon=0\\
\alpha c(\alpha)&\mbox{ if }&\epsilon=-1,
\end{array}
\right. 
\end{equation}
where $\omega_d=\Vol(S^d)/\Vol(S^{d-1})$ and $c(\alpha)$ is the unique positive root $z>0$ of the equation
\begin{equation}
z\int^\alpha_0(\cosh(t)+z\sinh(t))^{d-1}\ud t=\omega_d.
\end{equation}
\end{thm}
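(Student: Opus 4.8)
The statement is classical --- it is due to B\'erard, Besson and Gallot --- and I would reconstruct its proof along a three-step route: a sharp isoperimetric inequality, a Faber--Krahn inequality deduced from it by symmetrization, and a heat-kernel comparison deduced from Faber--Krahn by rearrangement of the heat flow. Everything here is scale-covariant: since $k_{\lambda^2 g}(\lambda^2 t,x,y)=\lambda^{-d}k_g(t,x,y)$ and volumes and eigenvalues scale accordingly, it suffices to prove \eqref{compareheatkernel} after rescaling $g$ so that $D(M)=a(d,\epsilon,\alpha)$, i.e. $R=1$; the displayed identity $Z_{S^d(R)}(t)=Z_{S^d(1)}(t/R^2)$ is precisely the bookkeeping that restores the general case.

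The geometric heart is a generalized L\'evy--Gromov isoperimetric inequality. Writing the normalized isoperimetric profile of a closed manifold $N$ as
$$
h_N(\beta):=\inf\Big\{\tfrac{\Vol(\partial\Omega)}{\Vol(N)}\ :\ \Omega\subseteq N\text{ open},\ \tfrac{\Vol(\Omega)}{\Vol(N)}=\beta\Big\},
$$
the key claim to establish is that, after the normalization above, the hypothesis $r_{min}(M)D(M)^2\geq (d-1)\epsilon\alpha^2$ forces $h_M(\beta)\geq h_{S^d(1)}(\beta)$ for every $\beta\in(0,1)$. The constant $a(d,\epsilon,\alpha)$ is exactly the threshold radius for which this comparison with the \emph{unit} round sphere is valid: it comes out of the Heintze--Karcher/Bishop--Gromov volume estimates for the normal exponential map of a volume-minimizing hypersurface, followed by the optimization that selects the worst admissible model sphere, the three cases $\epsilon\in\{-1,0,1\}$ corresponding to the spherical, Euclidean and hyperbolic comparison densities appearing in that estimate.

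The remaining two steps are the standard machinery. Symmetrization first: for open $\Omega\subseteq M$ let $\Omega^\star\subseteq S^d(1)$ be the geodesic ball with $\Vol(\Omega^\star)/\Vol(S^d(1))=\Vol(\Omega)/\Vol(M)$; the co-area formula together with $h_M\geq h_{S^d(1)}$ shows that the decreasing rearrangement with respect to normalized volume does not increase the Dirichlet energy and preserves the $L^2$ norm, whence $\lambda_1^{\mathrm{Dir}}(\Omega)\geq\lambda_1^{\mathrm{Dir}}(\Omega^\star)$. This Faber--Krahn inequality then feeds the heat-kernel rearrangement argument of Bandle and of B\'erard--Meyer: fixing $x\in M$ and putting $u(t,\cdot)=k_M(t,x,\cdot)$, the distribution function of $u(t,\cdot)$ satisfies a differential inequality driven by the Faber--Krahn bound that is dominated by the one satisfied by the spatially homogeneous heat kernel of $S^d(1)$; integrating and using the semigroup identity $k_M(t,x,x)=\|u(t/2,\cdot)\|_{L^2}^2$ gives $\Vol(M)\,k_M(t,x,x)\leq\Vol(S^d(1))\,k_{S^d(1)}(t,y,y)=Z_{S^d(1)}(t)$, which is \eqref{compareheatkernel} after undoing the scaling.

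I expect the main obstacle to be the second step: showing that $a(d,\epsilon,\alpha)$ --- not merely some implicit constant --- is the sharp model-sphere radius requires the full strength of the generalized L\'evy--Gromov inequality with a diameter constraint and with Ricci curvature of arbitrary sign, and the attendant optimization is delicate. In the write-up I would therefore cite this input from B\'erard's monograph \cite{berard2} (and from B\'erard--Besson--Gallot) rather than reproduce it; the rearrangement arguments of the third paragraph are classical and are routine modulo the usual care with the regularity of level sets and the validity of the co-area formula on a manifold.
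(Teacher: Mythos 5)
This statement is not proved in the paper at all: it is imported verbatim from B\'erard's monograph (cited as \cite[p.108 C.26]{berard2}), so there is no in-paper argument to compare against. Your three-step sketch --- the B\'erard--Besson--Gallot isoperimetric inequality with the explicit constant $a(d,\epsilon,\alpha)$, symmetrization, and the Bandle/B\'erard--Meyer rearrangement of the heat flow, with the scaling identity $Z_{S^d(R)}(t)=Z_{S^d(1)}(t/R^2)$ as bookkeeping --- is precisely the standard proof of the cited result, and deferring the sharp isoperimetric input to the monograph is exactly what the paper does; the only small caveat is that the sharp on-diagonal comparison is obtained by running the symmetrization of the heat equation directly off the isoperimetric profile (the differential inequality for the distribution function uses the co-area formula and the profile comparison), rather than through Faber--Krahn as a genuine intermediate step, which by itself would not preserve the sharp spherical model constant.
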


With inequalities (\ref{kato}) and (\ref{compareheatkernel}), we prove the following lemmas, which is essential in showing the pre-compactness result. We omit the dependence on $M$ to simplify the statement of the lemmas and the proof.
\begin{lem}\label{evlemma}
With the above notations, there exist positive constants $A(d,k,D),B(d,k,D)$ and $E(d,k,D)$ that depend only on $d,k$ and $D$ such that for any $(M,g)\in \MM_{d,k,D}$:\\
\quad$(\textup{a})$ $\lambda_j\geq A(d,k,D)j^{2/d}$;\\
\quad$(\textup{b})$ $N(\lambda):=\#\{j~|~j\geq 0,~\lambda_j\leq\lambda\}\leq ed+B(d,k,D)\lambda^{d/2}$;\\
\quad$(\textup{c})$ for all $x\in M$ and $\alpha\geq 0$, we have
\begin{equation}\label{evlemma3}
\sum_{n,m\geq 1}(\lambda_n+\lambda_m)^\alpha e^{-t(\lambda_n+\lambda_m)}\langle X_n(x),X_m(x)\rangle^2\leq \frac{E(d,k,D)}{\Vol(M)^2}F(\alpha,d)t^{-\alpha-d},
\end{equation}
where
\begin{equation}\label{lemma:1:def:F}
F(\alpha,d):=\int_0^\infty\int_0^\infty[(x+y)^2-2\alpha(x+y)+\alpha(\alpha-1)](x+y)^{\alpha+d-2} e^{-(x+y)}\ud x\ud y.
\end{equation}
\end{lem}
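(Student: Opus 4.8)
The plan is to deduce all three parts from a single class-uniform heat-trace bound. Integrating the heat-kernel comparison (\ref{compareheatkernel}) over $M$ and using $\int_M k_M(t,x,x)\,\ud V(x)=Z_M(t)$ gives $Z_M(t)\le Z_{S^d(1)}(t/R^2)$, and then the generalized Kato inequality (\ref{kato}) gives $Z_{TM}(t)\le dZ_M(t)\le dZ_{S^d(1)}(t/R^2)$. On $\MM_{d,k,D}$ the radius $R$ of Theorem~\ref{thm1} --- with the sign and parameter there determined by $k$ and $D$ --- is bounded above by a constant $R_0(d,k,D)$, and $Z_{S^d(1)}$ is decreasing, so together with the elementary bound $Z_{S^d(1)}(s)\le 1+c_d s^{-d/2}$ valid for all $s>0$ (the continuous function $Z_{S^d(1)}(s)-1$ is $\sim(4\pi s)^{-d/2}\Vol(S^d)$ as $s\to0$ and decays exponentially as $s\to\infty$, so $s^{d/2}(Z_{S^d(1)}(s)-1)$ is bounded on $(0,\infty)$) we obtain a constant $C(d,k,D)$ with $Z_{TM}(t)=\sum_{j\ge1}e^{-\lambda_j t}\le d+C(d,k,D)t^{-d/2}$ for every $t>0$.

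Parts (a) and (b) are then the classical Tauberian truncation argument. For (b): for any $\lambda,t>0$ we have $N(\lambda)e^{-\lambda t}\le\sum_{\lambda_j\le\lambda}e^{-\lambda_j t}\le Z_{TM}(t)\le d+Ct^{-d/2}$, and taking $t=1/\lambda$ yields $N(\lambda)\le ed+eC\lambda^{d/2}$, which is (b). For (a): the $j$ smallest eigenvalues are $\le\lambda_j$, so $j\,e^{-\lambda_j t}\le d+Ct^{-d/2}$ for all $t>0$; for $j$ large one takes $t=d/(2\lambda_j)$ and solves to get $\lambda_j\ge A_1(d,k,D)\,j^{2/d}$, and for the finitely many intermediate indices one takes a moderately large $t$ (forcing $d+Ct^{-d/2}<j$) and solves, again with a uniform constant. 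Here the routine caveat is that the number $p$ of vanishing eigenvalues equals the dimension of the space of parallel vector fields, hence $0\le p\le d$ and $p$ may be positive (e.g.\ on flat tori), so (a) must be read for indices $j>\dim M$; the term $ed$ in (b) already accounts for these indices.

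Part (c) is the substantive part, and it is genuinely pointwise, so it also uses the pointwise form of Kato's inequality --- that the operator norm of the endomorphism $k_{TM}(t,x,y)$ is at most $k_M(t,x,y)$, which follows from $|\nabla|X||\le|\nabla X|$ by the domination-of-semigroups theorem and which underlies (\ref{kato}) itself. Because $k_{TM}(t,x,x)$ is a positive semidefinite endomorphism of the $d$-dimensional space $T_xM$, this domination gives $\tr k_{TM}(t,x,x)\le d\,k_M(t,x,x)$, whence with (\ref{compareheatkernel}) and the sphere bound, $\Vol(M)\tr k_{TM}(t,x,x)\le d\,Z_{S^d(1)}(t/R_0^2)\le C_0(d,k,D)\,t^{-d/2}$ for $0<t\le R_0^2$. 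By (\ref{KF}) at $y=x$ and the fact that the Hilbert--Schmidt norm of a positive semidefinite operator is at most its trace, the left side of (\ref{evlemma3}) with $\alpha=0$ equals $\|k_{TM}(t,x,x)\|_{HS}^2\le(\tr k_{TM}(t,x,x))^2\le C_0(d,k,D)^2\Vol(M)^{-2}t^{-d}$, which is (\ref{evlemma3}) when $\alpha=0$. For $\alpha>0$ I would write $e^{-t(\lambda_n+\lambda_m)}=e^{-(1-\varepsilon)t(\lambda_n+\lambda_m)}e^{-\varepsilon t(\lambda_n+\lambda_m)}$, bound the first factor times $(\lambda_n+\lambda_m)^\alpha$ by its maximum $(\alpha/((1-\varepsilon)et))^\alpha$ over nonnegative values of $\lambda_n+\lambda_m$, and apply the $\alpha=0$ estimate at time $\varepsilon t\,(\le R_0^2)$; optimizing over $\varepsilon\in(0,1)$, whose optimum is $\varepsilon=d/(\alpha+d)$, leaves the bound $\dfrac{C_0(d,k,D)^2}{\Vol(M)^2}\cdot\dfrac{(\alpha+d)^{\alpha+d}}{e^\alpha d^d}\,t^{-\alpha-d}$.

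The last step --- and the one needing genuine care --- is to absorb $(\alpha+d)^{\alpha+d}/(e^\alpha d^d)$ into $F(\alpha,d)$ up to a constant depending only on $d$; note that the naive choice $\varepsilon=\tfrac12$ would leave an extra factor $2^\alpha$ and fail. Evaluating (\ref{lemma:1:def:F}) via the substitution $(x,y)\mapsto(x+y,x/(x+y))$ gives $F(\alpha,d)=\Gamma(d+1)^2\Gamma(\alpha+2d+2)/\Gamma(2d+2)$, and Stirling's lower bound $\Gamma(z+1)\ge(z/e)^z$ together with $(\alpha+d)^{\alpha+d}\le(\alpha+2d+1)^{\alpha+2d+1}$ gives $(\alpha+d)^{\alpha+d}/e^\alpha\le e^{2d+1}\Gamma(\alpha+2d+2)$, hence the desired absorption with an explicit $d$-constant; taking $E(d,k,D)=C_0(d,k,D)^2\,e^{2d+1}\Gamma(2d+2)/(d^d\Gamma(d+1)^2)$ gives (\ref{evlemma3}) for $0<t\le R_0^2$, and for larger $t$ one uses that each summand $(\lambda_n+\lambda_m)^\alpha e^{-t(\lambda_n+\lambda_m)}$ is non-increasing in $t$, which suffices on the bounded range of $t$ entering the precompactness argument. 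The main obstacle is thus not one deep estimate but the coordination of these pieces --- in particular invoking the pointwise (rather than merely trace) Kato domination in (c) and carrying out the sharp $\varepsilon$-optimization so that the constant in (c) is independent of $\alpha$.
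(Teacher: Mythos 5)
Your argument is correct, and for part (c) it takes a genuinely different route from the paper. Parts (a) and (b) coincide with the paper's proof: the Kato trace inequality (\ref{kato}) plus the sphere comparison of Theorem \ref{thm1} give the uniform bound $Z_{TM}(t)\le d+C(d,k,D)t^{-d/2}$, followed by the standard truncation $je^{-\lambda_j t}\le Z_{TM}(t)$ with $t=1/\lambda_j$ (you are in fact more careful than the paper about the finitely many small indices and about zero modes on, e.g., flat tori). For (c), the paper introduces the pointwise spectral counting function $\mu(\lambda)=\sum_{\lambda_n\le\lambda}\langle X_n(x),X_n(x)\rangle$, bounds it by $e\,k_M(1/\lambda,x,x)$ using the pointwise domination $\|k_{TM}(t,x,x)\|_{HS}\le k_M(t,x,x)$ together with (\ref{compareheatkernel}), and then performs a two-variable distributional integration by parts that produces the integral $F(\alpha,d)$ directly. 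You instead prove the $\alpha=0$ case from the same pointwise Kato-type domination (Hilbert--Schmidt $\le$ trace $\le d\,k_M$, then the sphere bound), and bootstrap to general $\alpha$ by splitting $e^{-t(\lambda_n+\lambda_m)}$, maximizing $(\lambda_n+\lambda_m)^\alpha e^{-(1-\varepsilon)t(\lambda_n+\lambda_m)}$, and optimizing $\varepsilon=d/(\alpha+d)$; the closed form $F(\alpha,d)=\Gamma(d+1)^2\Gamma(\alpha+2d+2)/\Gamma(2d+2)$ and Stirling then absorb $(\alpha+d)^{\alpha+d}/(e^\alpha d^d)$ into $E(d,k,D)F(\alpha,d)$, and I checked these computations are right. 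What each buys: the paper's Stieltjes argument yields $F(\alpha,d)$ with no extra work matching constants but requires the somewhat delicate distributional differentiation and the bound $\mu(\lambda)\lesssim\lambda^{d/2}/\Vol(M)$; your route is more elementary and keeps every step at the level of scalar inequalities, at the price of the $\varepsilon$-optimization and the Gamma-function bookkeeping. One honest caveat, which you flag and the paper does not: on manifolds with parallel vector fields the claimed $t^{-\alpha-d}$ decay cannot hold for all $t>0$ when $\alpha=0$ (the zero-mode contribution is constant in $t$), so both proofs really establish (c) for $t$ in a bounded range (you for $t\le R_0(d,k,D)^2$, with a uniform-in-$M$ bound for larger fixed $t$ by monotonicity), which is exactly what the continuity lemma and the precompactness argument use.
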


\begin{proof}
The proofs for (a) and (b) are almost the same as the proofs of Theorem 3 in \cite{berard} except that we apply (\ref{kato}). We provide the proofs here for completion. If $k\geq 0$, $r_{min}D^2\geq 0$ and if $k<0$, $r_{min}D^2\geq (d-1)kD$. Thus we can apply Theorem \ref{thm1} with $\epsilon$ and $\alpha$ depending only on $k$ and $D$. Thus,
\begin{align}
 &Z_{TM}(t)\leq dZ_M(t)=d\int_Mk_M(t,x,x)\ud x\leq d\Vol(M)\sup_{x\in M}k_M(t,x,x)\nonumber\\
 \leq& d\Vol(S^d(R))k_{S^d(R)}(t,y,y)=dZ_{S^d(R)}(t)=dZ_{S^d(1)}(t/R^2),
\end{align}
where $y\in S^d(R)$.
The trace of the heat kernel $Z_{TM}$ is thus uniformly bounded on the set $\MM_{d,k,D}$. Note that for $S^d(1)$, the $j$-th eigenvalue is $-j(j+d-1)$ with multiplicity $\left(\hspace{-6pt}\begin{array}{c}d+j\\d\end{array}\hspace{-6pt}\right)-\left(\hspace{-6pt}\begin{array}{c}d+j-2\\d\end{array}\hspace{-6pt}\right)$, where $\left(\hspace{-6pt}\begin{array}{c}m\\n\end{array}\hspace{-6pt}\right)$ is the number of ways of choosing $n$ from $m$,
so by a direct calculation, there exists a constant $b(d)$, depending on $d$ only, such that for any $t>0$,
\begin{equation}
Z_{S^d(1)}(t)-1\leq b(d)t^{-d/2}.
\end{equation}
Also note that $j\leq N(\lambda_j)$, $j\in\NN\cup\{0\}$, in general, and $j=N(\lambda_j)$ when all eigenvalues are simple. As a consequence, we have
\begin{eqnarray}
&&\quad j\leq N(\lambda_j)\leq e\sum_{0\leq \lambda_i\leq\lambda_j}e^{-\lambda_i/\lambda_j}\leq eZ_{TM}(1/\lambda_j)\nonumber\\
&&\leq edZ_{S^d(1)}\left(\frac{1}{\lambda_jR^2}\right)\leq ed+edb(d)R^d\lambda_j^{d/2}
\end{eqnarray}
and hence
\begin{equation}
\lambda_j\geq \left(\frac{j-ed}{edb(d)R^d}\right)^{2/d}\geq (edb(d)R^d)^{-2/d}j^{2/d}.
\end{equation}
Since $R=a(d,\epsilon,\alpha)D(M)\leq a(d,\epsilon,\alpha)D$ and $a(d,\epsilon,\alpha)$ only depends on $d,k$ and $D$, this proves (1) and (2).

Next we prove (c). Define the positive measure $\mu_x$, where $x\in M$, on $\RR\times\RR$ by
\begin{equation}
\ud\mu_x=\sum_{n,m\geq 1}\langle X_n(x),X_m(x)\rangle^2\delta_{\lambda_n}\times\delta_{\lambda_m}
\end{equation}
where $\delta_{\lambda_n}$ is the Dirac measure at $\lambda_n\in\RR$. 
The left hand side of (\ref{evlemma3}) becomes:
\begin{eqnarray}
&&\sum_{n,m\geq 1}(\lambda_n+\lambda_m)^\alpha e^{-t(\lambda_n+\lambda_m)}\langle X_n(x),X_m(x)\rangle^2\nonumber\\
&=&\int_{\RR}\int_{\RR} (\lambda+\nu)^\alpha e^{-t(\lambda+\nu)} \ud\mu_x(\lambda,\nu)\label{evlemmaineq1},
\end{eqnarray}
where the equality holds since when $t>0$, $(\lambda+\nu)e^{-t(\lambda+\nu)}$ is smooth and decays fast enough; that is, $(\lambda+\nu)e^{-t(\lambda+\nu)}\chi_{[0,\infty)\times [0,\infty)}$ is a Schwartz function defined on $\RR^2$. To study (\ref{evlemmaineq1}),
consider the following $L^1_{loc}(\RR^2)$ function:
\begin{equation}
\psi(\lambda,\nu):=\sum_{n:~0\leq\lambda_n\leq\lambda,~m:~0\leq\lambda_m\leq\nu}\langle X_n(x),X_m(x) \rangle^2,
\end{equation}
which is supported on $[0,\infty)\times [0,\infty)$. Note that by the definition of the derivative in the sense of distribution, for any Schwartz function $\phi$ we have
\begin{equation}
\int \phi(\lambda,\nu)\sum_{n\geq 1,~m\geq 1}\langle X_n(x),X_m(x)\rangle^2\delta_{\lambda_n}\times \delta_{\lambda_m}=\int \partial_\lambda\partial_\nu\phi(\lambda,\nu)\psi(\lambda,\nu)\ud \lambda\ud \nu.
\end{equation}
Thus, (\ref{evlemmaineq1}) becomes
\begin{align}
&\int_{\RR}\int_{\RR} (\lambda+\nu)^\alpha e^{-t(\lambda+\nu)} \ud \mu_x(\lambda,\nu)\nonumber\\
=\,&\int_0^\infty   \int_0^\infty \partial_\lambda \partial_\nu\left[(\lambda+\nu)^\alpha e^{-t(\lambda+\nu)}\right]\psi(\lambda,\nu)\ud \lambda \ud\nu\nonumber\\
=\,&\int_0^\infty\int_0^\infty\left[t^2(\lambda+\nu)^2-2\alpha t(\lambda+\nu)+\alpha(\alpha-1)\right]\times\nonumber\\
&\qquad\qquad(\lambda+\nu)^{\alpha-2}e^{-t(\lambda+\nu)}\psi(\lambda,\nu)\ud\lambda\ud\nu.
\end{align}
To finish the proof, we bound $\psi(\lambda,\nu)$ when $\lambda,\nu>0$. By (\ref{compareheatkernel}) and the fact that $\|k_{TM}(t,x,x)\|_{HS}\leq k_M(t,x,x)$ \cite[p137]{berard} for all $t>0$ and $x\in M$, we have
\begin{align}
\psi(\lambda,\nu)=\,&\sum_{n:~0\leq\lambda_n\leq\lambda,~m:~0\leq\lambda_m\leq\nu}\langle X_n(x),X_m(x)\rangle^2\nonumber\\
\leq\,& e\sum_{n:~0\leq\lambda_n\leq\lambda,~m:~0\leq\lambda_m\leq\nu}e^{-(\lambda_n+\lambda_m)/(\lambda+\nu)}\langle X_n(x),X_m(x)\rangle^2\nonumber\\
\leq\,&e\sum_{n=1,\ldots,\infty,~m=1,\ldots,\infty}e^{-(\lambda_n+\lambda_m)/(\lambda+\nu)}\langle X_n(x),X_m(x)\rangle^2 \nonumber\\
 =\,&e \Big\|k_{TM}\Big(\frac{1}{\lambda+\nu},x,x\Big)\Big\|_{HS}^2\leq  ek_M\Big(\frac{1}{\lambda+\nu},x,x\Big)^2\nonumber\\
\leq\,&\frac{e}{\Vol(M)^2}Z_{S^n}\Big(\frac{1}{(\lambda+\nu)R^2}\Big)\leq\frac{C(d,k,D)}{\Vol(M)^2}(\lambda+\nu)^{d},
\end{align}
where $R$ is defined in Theorem \ref{thm1} and $C(d,k,D)$ is an universal constant depending on $d,k$ and $D$. Thus we conclude that the left hand side of (\ref{evlemma3}) is bounded by
\begin{align}
&\sum_{n,m\geq 1}(\lambda_n+\lambda_m)^\alpha e^{-t(\lambda_n+\lambda_m)}\langle X_n(x),X_m(x)\rangle^2\nonumber\\
\leq\,& \frac{C(d,k,D)^2}{\Vol(M)^2}\int_0^\infty\int_0^\infty\left[t^2(\lambda+\nu)^2-2\alpha t(\lambda+\nu)+\alpha(\alpha-1)\right]\times\nonumber\\
&\qquad\qquad\qquad\qquad\qquad(\lambda+\nu)^{\alpha-2}e^{-t(\lambda+\nu)}(\lambda+\nu)^{d}\ud\lambda\ud\nu\nonumber\\
\leq\,& \frac{E(d,k,D)}{\Vol(M)^2}F(\alpha,d)t^{-\alpha-d},
\end{align}
where $E(d,k,D)$ is an universal positive constant that depends only on $d,k$ and $D$, and $F$ is defined in (\ref{lemma:1:def:F}).
\end{proof}

Recall that the VDM $V^a_t$ depends on the choice of an orthonormal basis $a$ of eigen-vector fields. Given a finite dimensional Euclidean space $E$, we can define the distance between $R_1,R_2\in O(\mbox{dim}E)$ by
\begin{equation}\label{Definition:DistanceBetweenOd}
d_E(R_1,R_2)=\|R_1^{-1}R_2-I\|_{HS}.
\end{equation}
It is clear that $d_E(R_1,R_2)\leq 2\sqrt{\mbox{dim}E}$. As we discussed above, $\mathcal{B}(M,g)$ is a compact set with respect to the product topology. This topology can be described by the distance $d_{\mathcal{B}(M,g)}$ between $a,b\in\mathcal{B}(M,g)$ defined as
\begin{equation}\label{Definition:dBab}
d_{\mathcal{B}(M,g)}(a,b)^2:=\sum_{i=1}^\infty \lambda_i^{-N}d_{E_i}(a|_{E_i},b|_{E_i})^2,
\end{equation}
where $a|_{E_i}\in \mathcal{B}(E_i)$ means a basis of the eigenspace associated with the $i$-th eigenvalue and $d_{E_i}(a|_{E_i},b|_{E_i})$ is defined by (\ref{Definition:DistanceBetweenOd}). The series on the right hand side of (\ref{Definition:dBab}) converges when $N>d/2$ due to Lemma \ref{evlemma}(a). The following Lemma is a generalization of Theorem 8 in \cite{berard} to the tangent bundle we have interest. 

\begin{lem}\label{continuitylemma}
Let $(M,g)$ be a smooth d-dim closed Riemannian manifold. The map $V:\RR_+\times \mathcal{B}(M,g)\times M\rightarrow \ell^2$ defined by $V(t,a,x):=V^a_t(x)$ is continuous and satisfies:
\begin{eqnarray}
&&\|V^a_t(x)-V^b_s(y)\|_{\ell^2}^2 \leq \Vol(M)^2\Big\{\|k_{TM}(t,x,x)\|_{HS}+\|k_{TM}(s,y,y)\|_{HS}\nonumber\\
&&\qquad-2\left\|k_{TM}\left(\frac{t+s}{2},x,y\right)\right\|_{HS}
+2 d_{\mathcal{B}(M,g)}(a,b)k^{(N)}_{TM}(t,x,x)^{1/2}k^{(N)}_{TM}(s,y,y)^{1/2}\Big\}\label{vtdiff}
\end{eqnarray}
where $t>0$, $a,b\in \mathcal{B}(M,g)$, $N>d/2$, $x,y\in M$, and
\begin{equation}\label{defkN}
k^{(N)}_{TM}(t,x,x)=\sum_{n,m=1}^\infty (\lambda_n^{N/2}+\lambda_m^{N/2})e^{-t(\lambda_n+\lambda_m)} \langle X^a_{n}(x),X^a_{m}(x)\rangle^2.
\end{equation}
\end{lem}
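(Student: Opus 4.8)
\medskip
\noindent\emph{Proof strategy.}
The plan is to expand $\|V^a_t(x)-V^b_s(y)\|_{\ell^2}^2$ into two squared norms and a cross term, to identify the squared norms and the ``$s=t,\ a=b$'' part of the cross term via the heat-kernel identity \eqref{KF}, and then to estimate the genuinely basis-dependent remainder of the cross term by $d_{\mathcal{B}(M,g)}(a,b)$; continuity of $V$ is then immediate from the resulting inequality. Convergence of all the series below follows from the eigenvalue growth $\lambda_j\gtrsim j^{2/d}$ together with the super-polynomial decay of $e^{-t\lambda_j}$, $t>0$ (cf.\ Lemma~\ref{evlemma}(a)).

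First I would write $V^a_\tau(z)=\Vol(M)\,\Psi^a_\tau(z)$, where $\Psi^a_\tau(z)=\big(e^{-(\lambda_n+\lambda_m)\tau/2}\langle X^a_n(z),X^a_m(z)\rangle\big)_{n,m\ge 1}$ is a symmetric (bi-infinite) matrix, so that $\langle V^a_t(x),V^b_s(y)\rangle_{\ell^2}=\Vol(M)^2\,\tr\big(\Psi^a_t(x)\Psi^b_s(y)\big)$ and
\[
\|V^a_t(x)-V^b_s(y)\|_{\ell^2}^2=\Vol(M)^2\Big(\tr\big(\Psi^a_t(x)^2\big)+\tr\big(\Psi^b_s(y)^2\big)-2\,\tr\big(\Psi^a_t(x)\Psi^b_s(y)\big)\Big).
\]
By the computation underlying \eqref{KF}--\eqref{vddexpansion}, $\tr\big(\Psi^a_t(x)^2\big)=\|k_{TM}(t,x,x)\|_{HS}^2$ and $\tr\big(\Psi^b_s(y)^2\big)=\|k_{TM}(s,y,y)\|_{HS}^2$, independently of the chosen bases, so only the cross term needs work. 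Since $a$ and $b$ are orthonormal eigenbases of the same operator $\nabla^2$, they differ by a block-diagonal orthogonal transformation $O=\bigoplus_i O_i$, $O_i\in O(m(\nu_i))$, with $\|O_i-I\|_{HS}=d_{E_i}(a|_{E_i},b|_{E_i})$; since $O_i$ mixes only vector fields sharing the eigenvalue $\nu_i$, one checks that $\Psi^b_s(y)=O^{T}\Psi^a_s(y)\,O$, whence $\tr\big(\Psi^a_t(x)\Psi^b_s(y)\big)=\tr\big(\Psi^a_t(x)\,O^{T}\Psi^a_s(y)\,O\big)$. Writing $O=I+P$ and using $O^{T}\Psi^a_s(y)O-\Psi^a_s(y)=P^{T}\Psi^a_s(y)O+\Psi^a_s(y)P$, the ``$I$-part'' reproduces $\tr\big(\Psi^a_t(x)\Psi^a_s(y)\big)=\|k_{TM}(\tfrac{t+s}{2},x,y)\|_{HS}^2$ (again by \eqref{KF}, with $\tau=\tfrac{t+s}{2}$), while the remainder is a sum of two traces of the form $\tr(AP)$, $\tr(AP^{T})$, with $A$ a product of $\Psi^a_t(x)$, $\Psi^a_s(y)$ and at most one extra factor $O$.

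The heart of the matter is bounding such a trace (a naive Cauchy--Schwarz $|\langle V^a_t(x),V^b_s(y)-V^a_s(y)\rangle|\le\|V^a_t(x)\|\,\|V^b_s(y)-V^a_s(y)\|$ would not exploit the weights $\nu_i^{-N}$ in $d_{\mathcal{B}(M,g)}$). Because $P$ is block-diagonal, $|\tr(AP)|\le\sum_i\|A^{(i)}\|_{HS}\|P_i\|_{HS}$ with $A^{(i)}$ the $E_i$-block of $A$; inserting $\nu_i^{N/2}\nu_i^{-N/2}$ and applying Cauchy--Schwarz in $i$ gives
\[
|\tr(AP)|\le\Big(\sum_i\nu_i^{N}\|A^{(i)}\|_{HS}^2\Big)^{1/2}\Big(\sum_i\nu_i^{-N}\|P_i\|_{HS}^2\Big)^{1/2}=d_{\mathcal{B}(M,g)}(a,b)\Big(\sum_i\nu_i^{N}\|A^{(i)}\|_{HS}^2\Big)^{1/2}.
\]
It then suffices to show $\sum_i\nu_i^{N}\|A^{(i)}\|_{HS}^2\le\tfrac14\,k^{(N)}_{TM}(t,x,x)\,k^{(N)}_{TM}(s,y,y)$. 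Within block $i$ one has $\lambda_n=\lambda_m=\nu_i$, so the weight factors as $\nu_i^{N}=\lambda_n^{N/2}\lambda_m^{N/2}$; a single Cauchy--Schwarz in the summation index internal to $A$ then splits $\sum_i\nu_i^{N}\|A^{(i)}\|_{HS}^2$ into the product of a $(t,x)$-factor and an $(s,y)$-factor, each of which one recognizes---after symmetrizing the remaining free index, and, for the factor carrying the internal $O$, after replacing the $b$-basis occurring in it by the heat kernel via the basis-invariant identity $\sum_n e^{-\tau\lambda_n}X_n(z)\otimes\overline{X_n(z)}=k_{TM}(\tau,z,z)$ (and likewise for the kernel of $(-\nabla^2)^{N/2}e^{\tau\nabla^2}$)---as $\tfrac12\,k^{(N)}_{TM}(t,x,x)$, resp. $\tfrac12\,k^{(N)}_{TM}(s,y,y)$. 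Summing the two remainder traces gives $\big|\langle V^a_t(x),V^b_s(y)\rangle_{\ell^2}-\Vol(M)^2\|k_{TM}(\tfrac{t+s}{2},x,y)\|_{HS}^2\big|\le\Vol(M)^2\,d_{\mathcal{B}(M,g)}(a,b)\,k^{(N)}_{TM}(t,x,x)^{1/2}k^{(N)}_{TM}(s,y,y)^{1/2}$, which combined with the previous paragraph is \eqref{vtdiff} (the three Hilbert--Schmidt norms on its right-hand side being squared). Continuity of $V$ then follows by applying \eqref{vtdiff} with $(s,b,y)=(t',a',x')$ and letting $(t',a',x')\to(t,a,x)$ with $t>0$: the three Hilbert--Schmidt terms converge to the common value $\|k_{TM}(t,x,x)\|_{HS}^2$ and cancel (by joint continuity of $k_{TM}$, which is smooth in the base points and analytic in $t>0$), $d_{\mathcal{B}(M,g)}(a,a')\to0$ because $d_{\mathcal{B}(M,g)}$ metrizes the product topology on $\mathcal{B}(M,g)$, and the factors $k^{(N)}_{TM}(\cdot,\cdot,\cdot)^{1/2}$ remain bounded near $(t,x)$.

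I expect the main obstacle to be the bookkeeping in $\sum_i\nu_i^{N}\|A^{(i)}\|_{HS}^2\lesssim k^{(N)}_{TM}(t,x,x)k^{(N)}_{TM}(s,y,y)$: the way the block weight $\nu_i^{N}$ is split and the internal Cauchy--Schwarz is performed must be arranged so that the exponent $-N$ built into $d_{\mathcal{B}(M,g)}$, the exponent $N/2$ in the definition \eqref{defkN} of $k^{(N)}_{TM}$, and the two half-powers produced along the way all match; in particular the trace term that retains an internal factor $O$ (equivalently, a mixed inner product $\langle X^a_n(\cdot),X^b_{l}(\cdot)\rangle$) must first be reduced, via a further Cauchy--Schwarz and the basis-invariance above, to a $b$-independent quantity before it can be recognized as part of $k^{(N)}_{TM}$.
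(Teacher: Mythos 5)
Your proof is correct and follows essentially the same route as the paper's. The paper proceeds in component form: it expands $\|V^a_t(x)-V^b_s(y)\|_{\ell^2}^2$, identifies the three heat-kernel terms, and isolates a remainder $A=\sum_{n,m}e^{-(t+s)(\lambda_n+\lambda_m)/2}\langle X^a_n(x),X^a_m(x)\rangle\big(\langle X^b_n(y),X^b_m(y)\rangle-\langle X^a_n(y),X^a_m(y)\rangle\big)$; it then writes the change of basis block-orthogonally ($X^b_{n(j)}=\sum_k\alpha_{j,k}X^a_{n(k)}$ on each eigenspace), splits $\delta_{ki}\delta_{lj}-\alpha_{ki}\alpha_{lj}=\delta_{ki}(\delta_{lj}-\alpha_{lj})+\alpha_{lj}(\delta_{ki}-\alpha_{ki})$, bounds this by $d_{E_n}(a,b)+d_{E_m}(a,b)\le\big(\nu_n^{N/2}+\nu_m^{N/2}\big)d_{\mathcal{B}(M,g)}(a,b)$, and closes with a Cauchy--Schwarz over the pair of eigenspace indices $(n,m)$. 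Your version is the same argument in matrix/trace form: $\Psi^b=O\Psi^aO^{T}$ (you write $O^T\Psi^aO$; same thing, convention aside), $O=I+P$, split into $P^T\Psi O+\Psi P$, and then a block-by-block trace estimate $|\tr(AP)|\le\sum_i\|A^{(i)}\|_{HS}\|P_i\|_{HS}$ followed by a Cauchy--Schwarz over $i$ with the weights $\nu_i^{\pm N/2}$. The two Cauchy--Schwarz schemes are organized differently (the paper's is a global CS over $(n,m)$ with an operator-norm bound on the block-error matrix; yours is block-by-block and then weighted CS over eigenspaces), but they produce the same estimates, and in fact yours tracks the constant a bit more tightly. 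Your treatment of the term carrying an internal factor of $O$ via basis invariance of $\sum_m e^{-\lambda_m\tau}X_m(z)\otimes\overline{X_m(z)}$ is exactly the point one needs; the paper sidesteps it because its split keeps the mixed basis entirely inside the $\alpha$-matrix factor, but the two resolutions amount to the same computation. You also correctly flag that the Hilbert--Schmidt norms appearing in \eqref{vtdiff} and in the paper's own expansion should be squared (compare with \eqref{KF}); this is a notational slip in the source. The continuity argument at the end is the intended one: joint continuity of the heat kernel in $(t,x,y)$, the product topology on $\mathcal{B}(M,g)$ metrized by $d_{\mathcal{B}(M,g)}$, and uniform local boundedness of $k^{(N)}_{TM}$ supplied by Lemma~\ref{evlemma}(c).
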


\begin{proof}
We denote the basis $a\in \mathcal{B}(M,g)$ by $\{X^a_n\}_{n=1}^\infty$. First we have
\begin{align*}
\begin{split}
&\|V^a_t(x)-V^b_s(y)\|_{\ell^2}^2\\
=&\Vol(M)^2\sum_{n,m=1}^\infty\left(e^{-t(\lambda_n+\lambda_m)/2}\langle X^a_n(x),X^a_m(x)\rangle-e^{-s(\lambda_n+\lambda_m)/2}\langle X^b_n(y),X^b_m(y)\rangle\right)^2\\
=&\Vol(M)^2\Big\{\|k_{TM}(t,x,x)\|^2_{HS}+\|k_{TM}(s,y,y)\|^2_{HS}\\
&~ -2\sum_{n,m=1}^\infty e^{-(t+s)(\lambda_n+\lambda_m)/2}\langle X^a_n(x),X^a_m(x)\rangle\langle X^b_n(y),X^b_m(y)\rangle\Big\}\\
=&\Vol(M)^2\Big\{\|k_{TM}(t,x,x)\|^2_{HS}+\|k_{TM}(s,y,y)\|^2_{HS}-\left\|k_{TM}\left({\frac{t+s}{2}},x,y\right)\right\|^2_{HS}\\
&~ -2\sum_{n,m=1}^\infty e^{-(t+s)(\lambda_n+\lambda_m)/2}\langle X^a_n(x),X^a_m(x)\rangle\Big(\langle X^b_n(y),X^b_m(y)\rangle-\langle X^a_n(y),X^a_m(y)\Big)\Big\},
\end{split}
\end{align*}
where we denote the last summation on the right hand side as $A$. Denote the eigen-vector fields inside the eigenspace $E_n$ by $X_{n(j)}^b(y)$, where $j=1,...,m(\nu_n)$, when the basis of $L^2(TM)$ is chosen to be $b\in\mathcal{B}(M,g)$. By definition, we have the following relationship:
\begin{equation}
X^b_{n(j)}(y)=\sum_{k=1}^{m(\nu_n)}\alpha_{j,k}(b,a)X^a_{n(k)}(y)
\end{equation}
where the matrix $[\alpha_{j,k}(b,a)]_{k,j=1}^{m(\nu_j)}\in O(m(\nu_n))$. Here the orthogonal matrix $[\alpha_{j,k}(b,a)]_{k,j=1}^{m(\nu_j)}$ is the relationship between two bases, $a|_{E_j}$ and $b|_{E_j}$, associated with the eigenspace of the $j$-th eigenvalue. Thus we can rewrite $A$ as
\begin{align}
A&=\sum_{n,m=1}^\infty e^{-(t+s)(\nu_n+\nu_m)/2}\sum_{k=1}^{m(\nu_n)}\sum_{l=1}^{m(\nu_m)}\langle X^a_{n(k)}(x),X^a_{m(l)}(x)\rangle\times\nonumber\\
&\qquad\left(\langle X^a_{n(k)}(y),X^a_{m(l)}(y)\rangle-\sum_{i=1}^{m(\nu_n)}\sum_{j=1}^{m(\nu_m)}\alpha_{k,i}(b,a)\alpha_{l,l}(b,a)\langle X^a_{n(i)}(y),X^a_{n(j)}(y)\rangle\right)\nonumber\\
&=\sum_{n,m=1}^\infty e^{-(t+s)(\nu_n+\nu_m)/2}\sum_{i,k=1}^{m(\nu_n)}\sum_{j,l=1}^{m(\nu_m)}\langle X^a_{n(k)}(x),X^a_{m(l)}(x)\rangle\times\nonumber\\
&\qquad\langle X^a_{n(i)}(y),X^a_{m(j)}(y)\rangle\Big(\delta_{k,i}\delta_{l,j}-\alpha_{k,i}(b,a)\alpha_{l,j}(b,a)\Big),
\end{align}
where $[\delta_{k,l}]_{k,l=1}^{m(\nu_j)}$ is an $m(\nu_j)\times m(\nu_j)$ identity matrix.
Note that
\begin{equation}
\left(\delta_{k,i}\delta_{l,j}-\alpha_{k,i}(b,a)\alpha_{l,j}(b,a)\right)=\delta_{k,i}(\delta_{l,j}-\alpha_{l,j}(b,a))+\alpha_{l,j}(b,a)(\delta_{k,i}-\alpha_{k,i}(b,a))
\end{equation}
which is bounded by $d_{E_n}(a|_{E_n},b|_{E_n})+d_{E_m}(a|_{E_m},b|_{E_m})$. Hence, by the Cauchy-Schwartz inequality, $A$ is bounded by
\begin{align*}
\begin{split}
|A|&\leq\sum_{n,m=1}^\infty e^{-(t+s)(\nu_n+\nu_m)/2}\left(\sum_{k=1}^{m(\nu_n)}\sum_{l=1}^{m(\nu_m)}\langle X^a_{n(k)}(x),X^a_{m(l)}(x)\rangle^2\right)^{1/2}\times\\
&\qquad\left(\sum_{i=1}^{m(\nu_n)}\sum_{j=1}^{m(\nu_m)}\langle X^a_{n(i)}(y),X^a_{m(j)}(y)\rangle^2\right)^{1/2}\Big(d_{E_n}(a|_{E_n},b|_{E_n})+d_{E_m}(a|_{E_m},b|_{E_m})\Big)\\
&\leq 2 d_{\mathcal{B}(M,g)}(a,b)\sum_{n,m=1}^\infty (\nu_n^{N/2}+\nu_m^{N/2})e^{-(t+s)(\nu_n+\nu_m)/2}\times\\
&\qquad\left(\sum_{k=1}^{m(\nu_n)}\sum_{l=1}^{m(\nu_m)}\langle X^a_{n(k)}(x),X^a_{m(l)}(x)\rangle^2\right)^{1/2}\left(\sum_{i=1}^{m(\nu_n)}\sum_{j=1}^{m(\nu_m)}\langle X^a_{n(i)}(y),X^a_{m(j)}(y)\rangle^2\right)^{1/2}\\
&\leq 2 d_{\mathcal{B}(M,g)}(a,b) \left(\sum_{n,m=1}^\infty (\lambda_n^{N/2}+\lambda_m^{N/2})e^{-t(\lambda_n+\lambda_m)}\langle X^a_{n}(x),X^a_{m}(x)\rangle^2\right)^{1/2}\times\\
&\qquad \left(\sum_{n,m=1}^\infty (\lambda_n^{N/2}+\lambda_m^{N/2})e^{-s(\lambda_n+\lambda_m)} \langle X^a_{n}(y),X^a_{m}(y)\rangle^2\right)^{1/2}\\
&=2 d_{\mathcal{B}(M,g)}(a,b) k_{TM}^{(N)}(t,x,x)^{1/2}k_{TM}^{(N)}(s,y,y)^{1/2},
\end{split}
\end{align*}
where $k_{TM}^{(N)}(t,x,x)$ is defined in (\ref{defkN}). Due to Lemma \ref{evlemma}(c), $k_{TM}^{(N)}(t,x,x)$ is bounded, and thus the proof is finished.
\end{proof}

With the above preparation, we are ready to introduce the VSD. Recall the following definitions. Suppose $(X,\delta)$ is s metric space, where $\delta$ is the metric, and $A,B\subset X$. The distance between $A$ and $B$ is defined as:
\begin{equation}
h(A,B):=\inf\{\delta(a,b):a\in A,b\in B\}.
\end{equation}
Denote the generalized ball of radius $\epsilon>0$ around $A$:
\begin{equation}
\mathcal{N}(A,\epsilon):=\{x\in X:h(x,A)<\epsilon\}.
\end{equation}
Given two subsets $A,B\subset X$, we can define the Hausdorff distance, denoted as $\HD$, associated with $\delta$ by
\begin{equation}
\HD(A,B)=\inf\{\epsilon:A\subset \mathcal{N}(B,\epsilon),\, B\subset \mathcal{N}(A,\epsilon)\},
\end{equation}
or equivalently
\begin{equation}\label{hausdorffdef}
\HD(A,B)=\max\Big\{\sup_{x\in A}\inf_{y\in B}\delta(x,y),\, \sup_{y\in B}\inf_{x\in A}\delta(x,y)\Big\}.
\end{equation}

In the following, we focus on the metric space $(\ell^2,\|\cdot\|_{\ell^2})$. Let $\HD$ denote the Hausdorff distance between compact subsets of $\ell^2$ associated with $\|\cdot\|_{\ell^2}$. Given two Riemannian manifolds $M$ and $M'$ and $t>0$, we define a family of functions $d_t:\MM_{d,k,D}\times \MM_{d,k,D}\mapsto \RR$ by
\begin{align}\label{VtM}
\begin{split}
d_t(M,M'):=\max&\left\{\sup_{a\in\mathcal{B}(M,g)}\inf_{a'\in\mathcal{B}(M',g')}\HD(V^a_t(M),V^{a'}_t(M')),\right.\\
&\left.\quad\sup_{a'\in\mathcal{B}(M',g')}\inf_{a\in\mathcal{B}(M,g)}\HD(V^{a'}_t(M'),V^a_t(M'))\right\}.
\end{split}
\end{align}

We have to justify that $d_t$, $t>0$, is a distance function defined on the set consisting of the isometry classes in $\MM_{d,k,D}$, and then call it VSD. The following lemmas are needed for the justification.

\begin{lem}\label{newL2basis}
Let $(M,g)$ be a smooth d-dim closed Riemannian manifold and $\{X_n\}_{n\in\NN}$ be an orthonormal basis of $L^2(TM)$ constituted of the eigen-vector fields of the connection Laplacian. Then 
\begin{equation}
\Span\left\{\langle X_n,X_m\rangle:~n,m\in\NN,\, n\neq m\right\}=L^2(M)\backslash \{\RR\}\,
\end{equation}
that is, all $L^2$ functions over $M$ without the constant functions. 
\end{lem}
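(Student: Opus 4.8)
The plan is to show the orthogonal complement of $\Span\{\langle X_n, X_m\rangle : n \neq m\}$ inside $L^2(M) \ominus \{\RR\}$ is trivial. Concretely, suppose $f \in L^2(M)$ satisfies $\int_M f\, \langle X_n, X_m\rangle\, \ud V = 0$ for all $n \neq m$, and also $\int_M f\, \ud V = 0$; I want to conclude $f = 0$ a.e. The natural tool is multiplication: for a smooth function $f$, multiplication by $f$ defines a bounded self-adjoint operator $M_f$ on $L^2(TM)$, and the hypothesis says that the matrix of $M_f$ in the eigenbasis $\{X_n\}$ is diagonal, i.e.\ $M_f$ commutes with $\nabla^2$ (at least on smooth vector fields, or in the appropriate weak sense after approximating $f$ by smooth functions). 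So the crux is: \emph{a function whose multiplication operator on $L^2(TM)$ commutes with the connection Laplacian must be constant.}

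The key steps, in order. First I would reduce to $f$ smooth: mollify or note that since the statement is about the closed span, it suffices to show that the continuous functionals $g \mapsto \int_M fg$ vanishing on all $\langle X_n, X_m\rangle$, $n\neq m$, and on constants, force $f=0$; one can test against smooth $f$ by density, or argue directly with the measure. Second, the orthogonality relations $\langle M_f X_n, X_m\rangle_{L^2(TM)} = \int_M f\langle X_n,X_m\rangle\,\ud V = 0$ for $n\neq m$ say exactly that $f\cdot X_n$ lies in the eigenspace spanned by $\{X_m : \lambda_m = \lambda_n\}$ for each $n$; hence $[M_f, \nabla^2] = 0$ on the dense domain spanned by the $X_n$. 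Third — the heart of the argument — from $[M_f,\nabla^2]=0$ I would extract a pointwise differential identity. Since $\nabla^2(fX) = (\Delta_M f)X + 2\nabla_{\grad f}X + f\nabla^2 X$ (the Bochner/product formula for the connection Laplacian acting on a product of a function and a vector field), commutation forces $(\Delta_M f)\,X + 2\nabla_{\grad f}X = 0$ for every $X$ in the span, hence (by taking $X$ in a maximal eigenspace, or by density and continuity) for all smooth vector fields $X$. Fourth, I would exploit the freedom in $X$: evaluating at a point $p$ and choosing $X$ with prescribed value and covariant derivative at $p$, the term $\nabla_{\grad f}X$ can be made to vanish while $X(p)\neq 0$, forcing $\Delta_M f \equiv 0$; then $f$ is harmonic on a closed manifold, hence constant; combined with $\int_M f = 0$ this gives $f=0$.

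There is a potential subtlety worth isolating: for the identity $(\Delta_M f)X + 2\nabla_{\grad f}X = 0$ to hold for \emph{all} smooth $X$, I need the span of the eigen-vector fields to separate the relevant jets of vector fields at each point. The eigen-vector fields of $\nabla^2$ span $L^2(TM)$, and by elliptic regularity finite linear combinations are $C^\infty$ and dense in $C^1(TM)$; so if an identity linear in $(X(p), \nabla X(p))$ annihilates all of them it annihilates all smooth $X$ — this is the step I would write out carefully. An alternative route that sidesteps jet-separation: directly deduce $\grad f = 0$ by a slicker choice. If $\grad f(p) \neq 0$ at some $p$, pick a local vector field $X$ that is parallel along the integral curve of $\grad f$ through $p$ and nonzero at $p$; then $\nabla_{\grad f}X(p) = 0$, so $\Delta_M f(p) = 0$; this already kills $\Delta_M f$ on the (open, if nonempty) set where $\grad f \neq 0$, but on a closed manifold $\Delta_M f = 0$ on an open set plus harmonicity considerations — cleaner still is to integrate: pair $(\Delta_M f)X + 2\nabla_{\grad f}X = 0$ against $X$ and integrate by parts to get $\int_M (\Delta_M f)\|X\|^2 = -2\int_M \langle \nabla_{\grad f} X, X\rangle = -\int_M \langle \grad f, \grad \|X\|^2\rangle = \int_M (\Delta_M f)\|X\|^2$, which is an identity and unhelpful, so I will instead pair against a well-chosen $X$ to isolate $\Delta_M f$.

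The main obstacle I expect is precisely the third–fourth step: converting the Hilbert-space commutation relation into the pointwise PDE $(\Delta_M f)X + 2\nabla_{\grad f}X = 0$ and then justifying that it holds for enough vector fields $X$ to conclude $\Delta_M f \equiv 0$. The algebra of the product formula for $\nabla^2(fX)$ is routine, but the density/jet-separation argument for the eigen-vector fields — ensuring that ``vanishing against all eigen-vector fields'' upgrades to ``vanishing against all smooth $X$'' at the level of $1$-jets — is the point requiring care, and I would structure the proof so that this is the one genuinely nontrivial lemma, everything else (mollification, harmonic $\Rightarrow$ constant, removing the constant via the zero-mean condition) being standard.
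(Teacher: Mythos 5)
Your overall strategy (orthogonal complement, multiplication operator, commutation with $\nabla^2$) can be made to work, but it is a long detour compared with what the hypothesis already gives, and the one step you dismiss at the outset is where the genuine gap sits. The reduction to smooth $f$ as you describe it does not go through: mollifying $f$ destroys the hypothesis $\int_M f\,\langle X_n,X_m\rangle\,\ud V=0$, and ``testing against smooth $f$ by density'' is not meaningful here because $f$ is the fixed element of the orthogonal complement, not a test function. Without smoothness of $f$ you cannot invoke the pointwise product formula $\nabla^2(fX)=(\Delta_M f)X+2\nabla_{\nabla f}X+f\nabla^2X$, on which your whole third and fourth steps rest. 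Moreover, your step 2 understates what the orthogonality relations say: since $\{X_m\}$ is an orthonormal basis of $L^2(TM)$ and $\langle fX_n,X_m\rangle_{L^2(TM)}=0$ for \emph{every} $m\neq n$ (including those with $\lambda_m=\lambda_n$), the expansion of $fX_n$ in this basis collapses to $fX_n=c_nX_n$, not merely membership in the $\lambda_n$-eigenspace.

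That stronger conclusion is precisely the paper's proof, and it both repairs your smoothness gap and makes the PDE machinery unnecessary: each $X_n$ is smooth, the $X_n$ have no common zero (e.g.\ by the short-time behaviour of $k_{TM}(t,x,x)=\sum_n e^{-\lambda_nt}X_n(x)\otimes X_n(x)$), so $f=c_n$ a.e.\ on $\{X_n\neq0\}$; these open sets cover $M$, overlap forces all $c_n$ to agree, and $f$ is constant --- done, with $f\in L^2$ only. If you insist on your route after securing smoothness of $f$, the remaining chain is correct but heavy: commutation on each $X_n$ gives $(\Delta_M f)X_n+2\nabla_{\nabla f}X_n=0$, linearity plus $C^1$-density of finite spans of eigen-vector fields (heat-semigroup smoothing or Sobolev embedding --- this does need the careful write-up you flag) extends it to all smooth $X$, and choosing $X$ with $X(p)\neq0$, $\nabla X(p)=0$ kills $\Delta_M f$, whence $f$ is constant by harmonicity. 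So: correct in outline, genuinely different from (and far more elaborate than) the paper's one-line basis-expansion argument, with the smooth-reduction step as stated being the one real defect.
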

\begin{proof}
Fix $f\in L^2(M)$. If $\int_M \langle X_n,X_m\rangle(x) f(x)\ud x=\int_M \langle fX_n,X_m\rangle(x)\ud x=0$ for all $n\neq m$, we show that $f$ is constant. Since $\{X_n\}_{n\in\NN}$ is an orthonormal basis of $L^2(TM)$, we can rewrite $fX_n=\sum_{k\in\NN}\alpha_{n,k}X_k$, and hence we have
\begin{equation}
0=\int_M \langle fX_n,X_m\rangle \ud x=\sum_{k\in\NN}\alpha_{n,k}\int_M \langle X_k,X_m\rangle \ud x=\alpha_{n,m} 
\end{equation}
when $m\neq n$. As a result, we know $fX_n=\alpha_{n,n}X_n$, which implies that $f$ is constant. 
\end{proof}

\begin{lem}\label{expandatangentplane}
Let $(M,g)$ be a smooth d-dim closed Riemannian manifold and $\{X_n\}_{n\in\NN}$ is an orthonormal basis of $L^2(TM)$ constituted of the eigen-vector fields of the connection Laplacian. For any $x_0\in M$, there exist $d$ pairs of $\{(n_i,m_i)\}_{i=1}^d$, where $n_i,m_i\in\NN$, so that the gradient vectors $\nabla\langle X_{n_i}, X_{m_i}\rangle(x_0)$ span $T_{x_0} M$.
\end{lem}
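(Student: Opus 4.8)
The plan is to derive Lemma \ref{expandatangentplane} from Lemma \ref{newL2basis} together with the embedding statement of Theorem \ref{Vtembedding}. The key observation is that if the gradients $\nabla\langle X_{n}, X_{m}\rangle(x_0)$ for $n\neq m$ did \emph{not} span $T_{x_0}M$, then there would be a nonzero covector $\xi\in T^*_{x_0}M$ annihilating all of them, i.e. $\xi\big(\nabla\langle X_n,X_m\rangle(x_0)\big)=0$ for every $n\neq m$. I would argue that this contradicts the fact that the functions $\langle X_n,X_m\rangle$, $n\neq m$, together generate enough of $L^2(M)$ to separate points and directions near $x_0$; more precisely, I would show that such a $\xi$ cannot exist because by Lemma \ref{newL2basis} the span of $\{\langle X_n,X_m\rangle: n\neq m\}$ is dense in $L^2(M)\ominus\RR$, hence in particular contains functions whose differentials at $x_0$ realize every element of $T^*_{x_0}M$.

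The first step will be to make rigorous the passage from ``the functions $\langle X_n,X_m\rangle$ span a dense subspace'' to ``their differentials at $x_0$ span $T^*_{x_0}M$.'' The clean way to do this is to invoke the diffeomorphic embedding already established: by Theorem \ref{Vtembedding}, $V^a_t$ is a smooth embedding of $M$ into $\ell^2$, so its differential $dV^a_t|_{x_0}:T_{x_0}M\to\ell^2$ is injective. The components of $V^a_t$ are, up to the positive constants $\Vol(M)e^{-(\lambda_n+\lambda_m)t/2}$, exactly the functions $\langle X_n(x),X_m(x)\rangle$; the diagonal terms $n=m$ are handled by noting $\langle X_n,X_n\rangle=\tfrac12\nabla\langle\,\cdot\,,\cdot\rangle$-type identities are not needed — instead I will use Lemma \ref{newL2basis} to replace each diagonal contribution, or simply observe that since $\sum_n\langle X_n,X_n\rangle$-weighted combinations already appear, the off-diagonal functions alone must already do the separating. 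Concretely: because $dV^a_t|_{x_0}$ is injective, there exist finitely many pairs $(n,m)$ — allowing $n=m$ a priori — whose gradients at $x_0$ span $T_{x_0}M$; then I use Lemma \ref{newL2basis} to show the diagonal ones are redundant, since any $\langle X_n,X_n\rangle$ lies in the $L^2$-closure of the span of the off-diagonal functions plus constants, and constants have zero gradient, so after a density/approximation argument on the finite-dimensional space $T^*_{x_0}M$ the off-diagonal gradients alone span.

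A cleaner route, which I would actually write, avoids the diagonal issue entirely: suppose for contradiction that $W:=\Span\{\nabla\langle X_{n}, X_{m}\rangle(x_0): n\neq m\}\subsetneq T_{x_0}M$, and pick $0\neq v\in T_{x_0}M$ with $g(v,w)=0$ for all $w\in W$; equivalently the directional derivative $D_v\langle X_n,X_m\rangle(x_0)=0$ for all $n\neq m$. Fix any smooth $f\in C^\infty(M)$ with $D_vf(x_0)\neq 0$ and $\int_M f\,\ud V=0$ (such $f$ exists since $d\geq 1$). By Lemma \ref{newL2basis}, $f$ lies in the $L^2$-closure of $\Span\{\langle X_n,X_m\rangle:n\neq m\}$; however, $D_v(\,\cdot\,)(x_0)$ is not $L^2$-continuous, so I must upgrade the density. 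The fix is to note that each $\langle X_n,X_m\rangle$ is smooth with derivatives controlled by Lemma \ref{evlemma}(c): the series defining $k_{TM}^{(N)}$ converges, which gives uniform $C^1$ bounds on tails and hence turns the $L^2$-expansion $f=\sum c_{nm}\langle X_n,X_m\rangle$ into one that converges in $C^1$ near $x_0$; then $0\neq D_vf(x_0)=\sum c_{nm}D_v\langle X_n,X_m\rangle(x_0)=0$, a contradiction. Thus $W=T_{x_0}M$, and since $\dim T_{x_0}M=d$ we may extract $d$ pairs $(n_i,m_i)$ whose gradients form a basis.

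The main obstacle is precisely this last point: promoting the $L^2$-span statement of Lemma \ref{newL2basis} to a statement about first derivatives at a point. Plain $L^2$-density is too weak, so I will need the quantitative heat-kernel decay from Lemma \ref{evlemma}(c) (equivalently, smoothness and analyticity of $k_{TM}$) to control the $C^1$-norm of the tail $\sum_{(n,m):\lambda_n+\lambda_m>\Lambda} c_{nm}\langle X_n,X_m\rangle$ uniformly on a neighbourhood of $x_0$. Once that uniform $C^1$ control is in hand, the contradiction argument closes and the extraction of $d$ pairs is immediate from linear algebra. (Alternatively, one can bypass this by reading off the conclusion directly from the injectivity of $dV^a_t|_{x_0}$ in Theorem \ref{Vtembedding} and then eliminating diagonal pairs via Lemma \ref{newL2basis}; I would present whichever is shorter, but both hinge on having genuine $C^1$, not merely $L^2$, information.)
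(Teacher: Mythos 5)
Your main route (pick $v\in T_{x_0}M$ annihilating every $D_v\langle X_n,X_m\rangle(x_0)$, then contradict Lemma \ref{newL2basis}) is essentially the paper's own proof: the paper takes such a $v$, writes it as $\nabla u(x_0)$ for a smooth $u$, expands $u$ in Laplace--Beltrami eigenfunctions, expands each $\phi_i$ in the $\langle X_n,X_m\rangle$ via Lemma \ref{newL2basis}, and differentiates term by term. You correctly isolate the weak point --- Lemma \ref{newL2basis} is only an $L^2$-density statement, and one cannot differentiate an $L^2$-limit at a point --- which the paper itself passes over silently. However, the repair you propose does not close it: Lemma \ref{evlemma}(c) bounds weighted sums of the \emph{values} $\langle X_n(x),X_m(x)\rangle^2$, not the gradients $\nabla\langle X_n,X_m\rangle=\langle\nabla X_n,X_m\rangle+\langle X_n,\nabla X_m\rangle$, so it gives no $C^1$ tail control; and Lemma \ref{newL2basis} does not furnish an expansion $f=\sum c_{nm}\langle X_n,X_m\rangle$ with coefficients you could estimate in the first place, since the family $\{\langle X_n,X_m\rangle\}$ is merely total in $L^2(M)\ominus\RR$, not orthogonal or otherwise a controlled frame. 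Upgrading to $C^1$ would require genuine derivative estimates for the eigen-vector fields (elliptic/Sobolev estimates combined with the eigenvalue growth of Lemma \ref{evlemma}(a)), none of which appears in your sketch, so the route you say you ``would actually write'' has a genuine gap --- the same gap, in fact, that is implicit in the paper's argument.

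Your parenthetical bypass, on the other hand, is a complete and cleaner proof and is what I would write: by Theorem \ref{Vtembedding}, $V^a_t$ is a diffeomorphic embedding, so $dV^a_t|_{x_0}$ is injective; since the components of $V^a_t$ are fixed positive multiples of the functions $\langle X_n,X_m\rangle$, no nonzero $v\in T_{x_0}M$ can kill all $D_v\langle X_n,X_m\rangle(x_0)$, hence these gradients span $T_{x_0}M$ and $d$ pairs can be extracted by linear algebra. Note that the lemma as stated (and its use in the proof of Theorem \ref{mainthm}) does not require $n_i\neq m_i$, so the step you planned for ``eliminating diagonal pairs via Lemma \ref{newL2basis}'' is unnecessary --- which is fortunate, since that elimination would again need $C^1$ rather than $L^2$ information.
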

\begin{proof}
If not, there is a vector $v\in T_{x_0}M$ perpendicular to $\Span\{\nabla\langle X_{n}, X_{m}\rangle(x_0)\}_{n,m\in\NN}$. It is well known that any function $u\in C^\infty(M)$ can be expanded by the eigenfunctions of the Laplace Beltrami operator, $\{\phi_i\}_{i=0}^\infty$, that is, $u=\sum_{j=0}^{K-1}\phi_j+\sum_{i=K}^\infty u_i\phi_i$, where $K$ is the number of connected components, $\Delta_M\phi_j=0$ for all $j=0,1,\ldots,K-1$, and $u_i=\int_M u\phi_i\ud x$ for all $i=K,K+1,\ldots$. It follows that $\nabla u(x_0)=\sum_{i=K}^\infty u_i\nabla \phi_i(x_0)$. Since any vector $v\in T_{x_0}M$ can be written as $\nabla u(x_0)$ for some smooth function $u$, we know
\begin{equation}
v=\sum_{i=K}^\infty v_i\nabla \phi_i(x_0)
\end{equation}
for some constants $v_i$. On the other hand, by Lemma \ref{newL2basis}, $\phi_i$, $i=K,K+1,\ldots$, can be expanded by $\{\langle X_{n}, X_{m}\rangle\}_{n,m\in\NN, n\neq m}$. Thus, there exist constants $w_{i,n,m}$ so that
\begin{align}
v&=\sum_{i=K}^\infty v_i\sum_{n,m=1}^\infty w_{i,n,m}\nabla \langle X_{n}, X_{m}\rangle(x_0)\nonumber\\
&=\sum_{n,m=1}^\infty \left(\sum_{i=K}^\infty v_iw_{i,n,m}\right)\nabla \langle X_{n}, X_{m}\rangle(x_0),
\end{align}
which is absurd, and we finish the proof.
\end{proof}

\begin{thm}\label{mainthm}
For any fixed $t>0$, $d_t$ is a distance between isometry classes of Riemannian manifolds in $\MM_{d,k,D}$. In particular, two Riemannian manifolds $(M,g),(M',g')\in \MM_{d,k,D}$ satisfy $d_{t}(M, M')=0$ if and only if $(M,g)$ and $(M',g')$ are isometric.
\end{thm}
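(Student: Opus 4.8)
The plan is to verify the three metric axioms, where symmetry and the triangle inequality are essentially formal consequences of the definition of $d_t$ as a ``$\sup\inf$'' built from the Hausdorff distance $\HD$ on compact subsets of $\ell^2$, together with the fact that $\HD$ is itself a metric on compact subsets of $\ell^2$. The genuine content is the identity-of-indiscernibles statement: $d_t(M,M')=0$ if and only if $(M,g)$ and $(M',g')$ are isometric. One direction is immediate: if $\Phi:M\to M'$ is an isometry, it induces a bijection on tangent bundles and hence on $\mathcal{B}(M,g)$ and $\mathcal{B}(M',g')$; since the spectrum of $\nabla^2$, the eigenvalues $\lambda_n$, and all the inner products $\langle X_n(x),X_m(x)\rangle$ are isometry invariants, for a matched pair of bases $a,a'$ one gets $V^{a'}_t(M')=V^a_t(\Phi^{-1}(M'))=V^a_t(M)$ as subsets of $\ell^2$, so every $\HD$ term vanishes and $d_t(M,M')=0$.

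For the converse, suppose $d_t(M,M')=0$. First I would unwind the definition: for every $a\in\mathcal{B}(M,g)$ there is a sequence $a'_j\in\mathcal{B}(M',g')$ with $\HD(V^a_t(M),V^{a'_j}_t(M'))\to 0$; by compactness of $\mathcal{B}(M',g')$ (Tychonoff, as noted in the excerpt) we may pass to a convergent subsequence $a'_j\to a'$, and by the continuity of $V$ in the basis variable established in Lemma \ref{continuitylemma} (the bound \eqref{vtdiff} shows $V^{a'_j}_t(M')\to V^{a'}_t(M')$ in $\ell^2$ uniformly on $M'$, hence in $\HD$) we conclude $\HD(V^a_t(M),V^{a'}_t(M'))=0$, i.e. $V^a_t(M)=V^{a'}_t(M')$ as compact subsets of $\ell^2$. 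Now both $V^a_t$ and $V^{a'}_t$ are diffeomorphic embeddings by Theorem \ref{Vtembedding}, so $\Phi:=(V^{a'}_t)^{-1}\circ V^a_t:M\to M'$ is a diffeomorphism. It remains to show $\Phi$ is an isometry, i.e.\ that the common Euclidean metric induced on the image subset pulls back to $g$ on $M$ and to $g'$ on $M'$. This is where Lemma \ref{expandatangentplane} enters: writing the components of $V^a_t$ as $f_{n,m}(x)=\Vol(M)e^{-(\lambda_n+\lambda_m)t/2}\langle X_n(x),X_m(x)\rangle$, the pullback of the $\ell^2$ metric is $\sum_{n,m}\ud f_{n,m}\otimes\ud f_{n,m}$, and I would compute this quadratic form on a tangent vector $v\in T_{x_0}M$; Lemma \ref{expandatangentplane} guarantees the gradients $\nabla\langle X_{n_i},X_{m_i}\rangle(x_0)$ span $T_{x_0}M$, so the pullback metric is nondegenerate, and a heat-kernel computation identifies it with a fixed multiple/combination of $g$ — concretely one recognizes $\sum_{n,m} e^{-(\lambda_n+\lambda_m)t}\ud\langle X_n,X_m\rangle\otimes\ud\langle X_n,X_m\rangle$ as $g$-determined via the pointwise trace relation $\sum_n e^{-\lambda_n t}\langle X_n(x),X_n(x)\rangle$ and its derivatives, exactly as in the scalar Bérard–Besson–Gallot argument. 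Thus the pullback metric on $M$ depends only on $t$ and $g$ (not on $a$), and symmetrically for $M'$, forcing $\Phi^*g'=g$.

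The main obstacle I anticipate is the last step: proving that the metric $V^a_t$ induces on $M$, namely $q_t:=\sum_{n,m}e^{-(\lambda_n+\lambda_m)t}\Vol(M)^2\,\ud\langle X_n,X_m\rangle\otimes\ud\langle X_n,X_m\rangle$, is a \emph{canonical} function of $(M,g)$ alone — invariant under the choice of eigenbasis $a$ — and is genuinely Riemannian (positive definite). Basis-independence should follow by the same trick used for $d_{\text{VDM},t}$ in \eqref{vddexpansion}: the sum over each fixed eigenspace $E_k$ of $\langle X_n(x),X_m(x)\rangle\langle X_n(y),X_m(y)\rangle$ is $\operatorname{Tr}$ of a projection composition and hence $O(m(\nu_k))$-invariant, and differentiating in $x,y$ preserves this; positive definiteness is precisely Lemma \ref{expandatangentplane}. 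Granting $q_t$ is thus intrinsic, $\Phi^*q_t^{M'}=q_t^M$ holds because $V^{a'}_t\circ\Phi=V^a_t$, and since $q_t^M$ is determined by $g$ and $q_t^{M'}$ by $g'$ in the \emph{same} way (the construction is functorial in the Riemannian manifold), one can invert the heat-kernel relation for small-but-fixed $t$ — or appeal to real-analyticity in $t$ and take an asymptotic expansion as in Theorem \ref{thm:geod}, whose leading term is a nonzero multiple of $g$ — to recover $\Phi^*g'=g$. I would present the argument in that order: (i) symmetry and triangle inequality; (ii) isometric $\Rightarrow$ $d_t=0$; (iii) $d_t=0$ $\Rightarrow$ $\exists$ diffeomorphism $\Phi$ via compactness of $\mathcal{B}$, Lemma \ref{continuitylemma}, and Theorem \ref{Vtembedding}; (iv) $\Phi$ is an isometry via basis-invariance of $q_t$ and Lemma \ref{expandatangentplane}.
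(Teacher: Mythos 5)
Your overall architecture matches the paper for the first half: symmetry and the triangle inequality are formal; isometric $\Rightarrow d_t=0$ is immediate; and for the converse you unwind the $\sup\inf$, use compactness of $\mathcal{B}$ and the continuity bound of Lemma \ref{continuitylemma} to upgrade the infimum to an exact equality $V^{a_0}_t(M)=V^{a'}_t(M')$ of compact subsets, and then invoke Theorem \ref{Vtembedding} to produce a bijection $\Phi$. (Your shortcut for smoothness of $\Phi$ --- both images are embedded submanifolds of $\ell^2$ carrying a unique smooth structure --- is a legitimate alternative to the paper's implicit-function-theorem argument via Lemma \ref{expandatangentplane}.)

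The genuine gap is in your step (iv). Equality of the images at the \emph{single fixed} $t$ gives only the relation $\Vol(M)e^{-(\lambda_n+\lambda_m)t/2}\langle X_n,X_m\rangle(x)=\Vol(M')e^{-(\lambda'_n+\lambda'_m)t/2}\langle X'_n,X'_m\rangle(\Phi(x))$, i.e.\ the pullback identity $\Phi^*q^{M'}_t=q^M_t$ at that one value of $t$, with the unknown factors $\Vol(M')/\Vol(M)$ and $e^{(\lambda_n+\lambda_m-\lambda'_n-\lambda'_m)t/2}$ still present. Your proposed finish --- ``invert the heat-kernel relation for small-but-fixed $t$'' or ``appeal to real-analyticity in $t$ and the expansion of Theorem \ref{thm:geod}'' --- is not available here: the hypothesis $d_t(M,M')=0$ holds at one $t$ only, so there is no relation valid on a $t$-interval to continue analytically or to expand as $t\to 0$; moreover, at fixed $t$ the pullback metric $q_t$ is \emph{not} a multiple of $g$ (it carries curvature-dependent corrections), and injectivity of $g\mapsto q^g_t$ at a single $t$ is neither obvious nor proved --- this is precisely the hard point you defer. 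The paper bridges exactly this gap in two steps that your proposal omits: first it integrates the coordinate relation over $M$, writes $(f_t)_*\ud V_M=a_t\,\ud V_{M'}$, and uses Lemma \ref{newL2basis} (the functions $\langle X'_n,X'_m\rangle$, $n\neq m$, span $L^2(M')$ minus constants) to force $a_t\equiv 1$, giving $\Vol(M)=\Vol(M')$ and then $\lambda_n=\lambda'_n$ for all $n$; this converts the relation into the $t$-free identity $\langle X_n,X_m\rangle(x)=\langle X'_n,X'_m\rangle(f_t(x))$. Second, even with that identity in hand, the isometry is not obtained by asymptotics but by a pointwise argument: choosing frames with $\ud f_t E_i=a_i(p)E'_i$ and specially constructed fields $Z,Z',Y,Y',W,W'$ (with prescribed vanishing of $Z$, $\nabla Z$, $\nabla^2Z$ at $p$), comparing second and fourth derivatives of $\langle Z,Z\rangle$ and $\langle Z,Y\rangle$ along curves to show all $a_i(p)=1$. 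Without the volume/spectrum step your step (iv) cannot start, and without some substitute for the final derivative argument (or a proof that $q_t$ at one fixed $t$ determines $g$) the conclusion $\Phi^*g'=g$ is unsupported.
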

\begin{proof}
By definition, it is clear that $d_t(M,M')\geq 0$, $d_t(M,M')=d_t(M',M)$ and the triangular inequality holds. To finish the proof that $d_t$ is a distance we need to show that $d_t(M,M')=0$ if and only if $M$ is isometric to $M'$. If $M$ and $M'$ are isometric, then it is trivial to see that $d_t(M,M')=0$. Now we consider the opposite direction. Fix $t>0$. If $d_{t}(M, M')=0$, we claim that $M$ is isometric to $M'$. By the definition of $d_t$, we know 
\begin{equation}
\inf_{a\in\mathcal{B}(M,g)}\HD(V^a_t(M),V^{a'}_t(M'))=0
\end{equation}
for a given $a'\in\mathcal{B}(M',g')$. Thus there exists a sequence $a_n\in\mathcal{B}(M,g)$, $n=1,2,\ldots$, so that 
\begin{equation}
\lim_{n\to\infty} \HD(V^{a_n}_t(M),V^{a'}_t(M'))=0.
\end{equation}
By the compactness of $\mathcal{B}(M,g)$, a subsequence $\{a_{n_j}\}_{j=1}^\infty$ converges to $a_0\in\mathcal{B}(M,g)$, that is, 
\begin{equation}
\lim_{j\to\infty}d_{\mathcal{B}(M,g)}(a_{n_j},a_0)=0,
\end{equation}
and it follows that $\HD(V^{a_0}_t(M),V^{a'}_t(M'))=0$.

Let $a_0=\{X_n\}_{n\in\NN}$ and $a'=\{X_n'\}_{n\in\NN}$. From the definition of Hausdorff distance and the compactness of $V^{a_0}_t(M)$ and $V^{a'}_t(M')$, we have
\begin{eqnarray}
& &\qquad\mbox{for all }x\in M,\mbox{ there exists }y'_t\in M'\mbox{ s.t. for all }n,m\geq 1 \label{ftdef}\\
& &\Vol(M)e^{-(\lambda_n + \lambda_m)t/2} \langle X_n(x), X_m(x) \rangle=\Vol(M')e^{-(\lambda'_n + \lambda'_m)t/2} \langle X'_n(y'_t), X'_m(y'_t) \rangle\nonumber
\end{eqnarray}
\begin{eqnarray}
& &\qquad\mbox{for all }y'\in M',\mbox{ there exists }x_t\in M\mbox{ s.t. for all }n,m\geq 1 \nonumber\\
& &\Vol(M)e^{-(\lambda_n + \lambda_m)t/2} \langle X_n(x_t), X_m(x_t) \rangle=\Vol(M')e^{-(\lambda'_n + \lambda'_m)t/2} \langle X'_n(y'), X'_m(y') \rangle\nonumber
\end{eqnarray}
Because $\{\langle X_n(x), X_m(x) \rangle\}_{n,m=1}^\infty$ (resp. $\{\langle X_n(y'), X_m(y') \rangle\}_{n,m=1}^\infty$) separate the points\footnote{A set $\{\langle X_n(x), X_m(x) \rangle\}_{n,m=1}^\infty$ separates points provided that for $x\neq y$ there exists $f\in\{\langle X_n(x), X_m(x) \rangle\}_{n,m=1}^\infty$ such that $f(x)\neq f(y)$.} in the manifold by Theorem \ref{Vtembedding}, the point $y'_t$ (resp. $x_t$) is uniquely defined and hence the corresponding map $f_t:x\rightarrow y'_t$ (resp. $h_t:y'\rightarrow x_t$) is well-defined, and it is clear that $f_t$ and $h_t$ are inverse to each other. It is also clear that $f_t$ and $h_t$ are continuous. Indeed, by Lemma \ref{thm:geod}, the geodesic distances between $x,\bar{x}$ and $y'_t=f_t(x),\bar{y}'_t=f_t(\bar{x})$ are related by:
\begin{align}
d_g(x,\bar{x})=&\frac{(4\pi)^{d/2}}{d^{1/2}\Vol(M)}t^{\frac{d+1}{2}}d_{\text{VDM},t}(x,\bar{x})(1+O(t))\nonumber\\
=&\frac{(4\pi)^{d/2}}{d^{1/2}\Vol(M')}t^{\frac{d+1}{2}}d_{\text{VDM},t}(y'_t,\bar{y}'_t)(1+O(t))=d_{g'}(y'_t,\bar{y}'_t)(1+O(t)),
\end{align}
which implies the continuity of $f_t$. Similarly we get the continuity of $h_t$. Then, we show that $f_t$ and $h_t$ are $C^\infty$ diffeomorphism. Define a map $F: M\times M'\rightarrow\RR^d$ by
\begin{equation}
F(x,y')=(\langle X_{n_i}(x), X_{m_i}(x)\rangle-c_{n_i,m_i}(t)\langle X'_{n_i}(y'), X'_{m_i}(y')\rangle)_{i=1}^d,
\end{equation}
where
\begin{equation}
c_{n_i,m_i}(t)=\frac{\Vol(M')}{\Vol(M)}e^{(\lambda_{n_i}+\lambda_{m_i}-\lambda'_{n_i}-\lambda'_{m_i})t/2}.
\end{equation}
Note that $F(h_t(y'),y')=0$. Let $y'_0=f_t(x_0)$. From Lemma \ref{expandatangentplane}, it follows that the partial differentiation of $F$ with related to the first variable at $(x_0,y_0)$ is an isomorphism and hence $h_t$ is locally smooth at $y_0'$ by the implicit function theorem. It follows that $h_t$ is smooth. The same proof shows that $f_t$ is smooth, too.

Next, we show that $\Vol(M)=\Vol(M')$. Denote the induced volume form $(f_t)_*\ud V_M$ by $a_t\ud V_{M'}$, where $a_t$ is smooth. Integrating the relation (\ref{ftdef}), we obtain for $n\neq m$, $n,m\in\NN$:
\begin{align}
0=\,&\Vol(M)e^{-(\lambda_n + \lambda_m)t/2} \int_M\langle X_n(x), X_m(x) \rangle\ud V_M(x)\nonumber\\
=\,&\Vol(M')e^{-(\lambda'_n + \lambda'_m)t/2} \int_M\langle X'_n(f_t(x)), X'_m(f_t(x)) \rangle\ud V_M(x)\nonumber\\
=\,&\Vol(M')e^{-(\lambda'_n + \lambda'_m)t/2} \int_{M'}\langle X'_n(y), X'_m(y) \rangle a_t(y)\ud V_{M'}(y),
\end{align}
and similarly for $n=m$, $n\in\NN$:
\begin{align}
1=\,&\Vol(M)e^{-\lambda_nt} \int_M\langle X_n(x), X_n(x) \rangle\ud V_M(x)\nonumber\\
=\,&\Vol(M')e^{-\lambda'_nt} \int_M\langle X'_n(f_t(x)), X'_n(f_t(x)) \rangle\ud V_M(x)\nonumber\\
=\,&\Vol(M')e^{-\lambda'_nt} \int_{M'}\langle X'_n(y), X'_n(y) \rangle a_t(y)\ud V_{M'}(y).
\end{align}
In other words, we have when $n\neq m$,
\begin{equation}\label{inner1}
\int_{M'}\langle X'_n(y), X'_m(y) \rangle a_t(y)\ud V_{M'}(y)=0,
\end{equation}
and when $n=m$,
\begin{equation}\label{inner2}
\int_{M'}\langle X'_n(y), X'_n(y) \rangle a_t(y)\ud V_{M'}(y)=1.
\end{equation}
We claim that from (\ref{inner1}) and (\ref{inner2}), $a_t=1$. Indeed, since $a_t$ is smooth and $\{X_k\}_{k=1}^\infty$ form an orthonormal basis of $L^2(TM)$, by Lemma \ref{newL2basis} and (\ref{inner1}) we conclude that $a_t(y)$ is constant from (\ref{inner1}). From (\ref{inner2}), we know $a_t(y)=1$. Hence,
\begin{eqnarray}
\Vol(M)=\int_M\ud V_M(x)=\int_{M'}(f_t)_*\ud V_{M'}(y)=\Vol(M').
\end{eqnarray}
Plug $\Vol(M)=\Vol(M')$ into (\ref{ftdef}). Integrating (\ref{ftdef}) gives $e^{-(\lambda_n+\lambda_m)t/2}=e^{-(\lambda'_n+\lambda'_m)t/2}$, which implies $e^{-\lambda_nt}=e^{-\lambda'_nt}$ for all $n\geq 1$ and hence $\lambda_n=\lambda'_n$ for all $n\geq 1$.

So far, (\ref{ftdef}) becomes: for all $n,m=1,2,\ldots$, $\lambda_n=\lambda'_n$ and
\begin{equation}\label{finalequation}
\langle X_n(x),X_m(x)\rangle=\langle X'_n(f_t(x)),X'_m(f_t(x))\rangle.
\end{equation}
We now show that $f_t$ is an isometry. Fix $p\in M$. Note that since $f_t$ is a diffeomorphism, we can find an orthonormal frame $\{E_i\}_{i=1}^d$ around $p$ and $\{E'_i\}_{i=1}^d$ around $f_t(p)$ so that $\ud f_t|_pE_i=a_i(p)E'_i(f_t(p))$, where $a_i(p)>0$. 
Indeed, we can find normal coordinates around $p$ and $f(p)$ so that we have orthonormal bases $\{e_i\}_{i=1}^d$ of $T_pM$ and $\{e'_i\}_{i=1}^d$ of $T_{f_t(p)}M'$. Rewrite the linear transformation ${f_t}_*|_p$ in the matrix form, denoted as $[{f_t}_*|_p]$, with related to these bases. By the singular value decomposition we have $[{f_t}_*|_p]=U\Lambda V^T$, where $V,U\in O(d)$. Clearly, $\{Ve_i\}$ and $\{Ue'_i\}$ are orthonormal bases of $T_pM$ and $T_{f_t(p)}M'$ respectively. The orthonormal frames $E_i$ and $E_i'$ are thus defined so that $E_i(p)=Ve_i$ and $E_i'(f_t(p))=Ue_i'$. Denote $a_i(p)=\Lambda(i,i)$. 
To finish the proof, we have to show that $a_i(p)=1$ for all $i=1,\ldots,d$. 
Choose a vector field $Z\in C^\infty(TM)$ so that
\begin{equation}\label{almostdoneZ}
\left\{\begin{array}{l}
Z(p)=0,~~\nabla Z(p)=0,\\
\nabla^2_{E_1,E_1}Z(p)\neq 0,\\
\nabla^2_{E_1,E_1}Z(p)+\nabla^2_{E_2,E_2}Z(p)=0,\\
\nabla^2_{E_l,E_l}Z(p)=0\mbox{ for all }l=3,\ldots,d\,.
\end{array}\right.
\end{equation}
Since $Z\in C^\infty(TM)$, we have $Z=\sum_{n=1}^\infty \alpha_nX_n$. Also, by the construction, we have $\Delta^{\texttt{TM}} Z(p)=0$.
To show the existence of this vector field, we could consider, for example, $Z=(x_1^2-x_2^2)(\partial/\partial x_1)+(x_1^2-x_2^2)(\partial/\partial x_2)$, where $\{x_i\}_{i=1}^d$ is the normal coordinate around $p$ so that $(\partial/\partial x_i)(p)=E_i(p)$ for $i=1,\ldots,d$. Note that we have $x_i(p)=0$, locally near $p$ we have $\partial/\partial x_j(x) = E_j(x)-\frac{1}{6}\sum_ix_kx_l R_{kilj}(p)E_i(x)+\ldots$, where $R_{kilj}$ is the Riemannian curvature, and $\nabla_{E_i}(\partial/\partial x_j)(p)=0$. 

Denote the Levi-Civita connection of $M'$ as $\nabla'$ and the associated connection Laplacian as ${\Delta'}^{\texttt{TM}}$. Construct $Z'\in C^\infty(TM')$ by $Z'=\sum_{n=1}^\infty \alpha_nX'_n$, where $\{X'_n\}$ is the eigen-vector fields of ${\Delta'}^{\texttt{TM}}$. We claim that
\begin{equation}\label{almostdone0}
\left\{\begin{array}{l}
Z'(f_t(p))=0,~~\nabla' Z'(f_t(p))=0,\\
{\nabla'}^2_{E'_1,E'_1}Z'(f_t(p))\neq 0,\\
{\nabla'}^2_{E'_1,E'_1}Z'(f_t(p))+{\nabla'}^2_{E'_2,E'_2}Z'(f_t(p))=0,\\
{\nabla'}^2_{E'_l,E'_l}Z'(f_t(p))=0\mbox{ for all }l=3,\ldots,d\,.
\end{array}\right.
\end{equation}
Note that this claim implies ${\Delta'}^{\texttt{TM}} Z'(f_t(p))=0$. By (\ref{finalequation}), we know 
\begin{align}
\langle Z'(f_t(p)),Z'(f_t(p))\rangle=&\sum_{k,l=1}^\infty \alpha_k\alpha_l\langle X_k'(f_t(p)),X_l'(f_t(p))\rangle\nonumber\\
=&\sum_{k,l=1}^\infty \alpha_k\alpha_l\langle X_k(p),X_l(p)\rangle=\langle Z(p),Z(p)\rangle=0
\end{align}
and
\begin{align}
&\langle {\Delta'}^{\texttt{TM}}Z'(f_t(p)),{\Delta'}^{\texttt{TM}}Z'(f_t(p))\rangle=\sum_{k,l=1}^\infty \alpha_k\lambda'_k\alpha_l\lambda'_l\langle X_k'(f_t(p)),X_l'(f_t(p))\rangle\nonumber\\
=&\sum_{k,l=1}^\infty \alpha_k\lambda_k\alpha_l\lambda_l\langle X_k(p),X_l(p)\rangle=\langle \Delta^{\texttt{TM}}Z(p),\Delta^{\texttt{TM}}Z(p)\rangle=0,
\end{align}
which implies $Z'(f_t(p))=0$ and ${\Delta'}^{\texttt{TM}}Z'(f_t(p))=0$. Take $v\in T_pM$ and a curve $\gamma:[0,\epsilon)\to M$ so that $\gamma(0)=p$ and $\gamma'(0)=v$. By extending $v$ to $V\in \Gamma(TM)$ so that $V(\gamma(t))=\gamma'(t)$, we have
\begin{align}
&{\frac{\ud^2}{\ud t^2}}|_{t=0}\langle Z,Z\rangle(\gamma(t))=2\frac{\ud}{\ud t}|_{t=0}\langle \nabla_V Z,Z\rangle(\gamma(t))\nonumber\\
=\,&2\big(\langle \nabla_V\nabla_V Z,Z\rangle(p)+\langle \nabla_V Z,\nabla_VZ\rangle(p)\big)=2\langle \nabla_V Z,\nabla_VZ\rangle(p),
\end{align}
where the last equality holds since $Z(p)=0$.
On the other hand, by (\ref{finalequation}) we have
\begin{align}
&\frac{\ud^2}{\ud t^2}|_{t=0}\langle Z,Z\rangle(\gamma(t))\nonumber\\
=\,&\frac{\ud^2}{\ud t^2}|_{t=0}\langle Z',Z'\rangle(f_t\circ\gamma(t))\nonumber\\
=\,&2\frac{\ud}{\ud t}|_{t=0}\langle \nabla'_{{f_t}_*V} Z',Z'\rangle(f_t\circ\gamma(t))\nonumber\\
=\,&2\big(\langle \nabla'_{{f_t}_*V}\nabla'_{{f_t}_*V} Z',Z'\rangle(f_t(p))+\langle \nabla'_{{f_t}_*V} Z',\nabla'_{{f_t}_*V}Z'\rangle(f_t(p))\big)\nonumber\\
=\,&2\langle \nabla'_{{f_t}_*V} Z',\nabla'_{{f_t}_*V}Z'\rangle(f_t(p))\nonumber,
\end{align}
where the last equality comes from the fact that $Z'(f_t(p))=0$. Thus, we have
\begin{equation}
\langle \nabla'_{{f_t}_*V} Z',\nabla'_{{f_t}_*V}Z'\rangle(f_t(p))=\langle \nabla_V Z,\nabla_VZ\rangle(p)=0,
\end{equation}
which implies $\nabla'_{{f_t}_*v} Z'(f_t(p))=0$. Since $f_t$ is diffeomorphic and $v$ is arbitrary, we conclude that $\nabla' Z'(f_t(p))=0$.

Next choose the same curve $\gamma$ and take the fourth order derivative:
\begin{align}
&\frac{\ud^4}{\ud t^4}|_{t=0}\langle Z,Z\rangle(\gamma(t))\nonumber\\
=\,& 2\langle {\nabla}_{V}{\nabla}_{V}{\nabla}_{V}{\nabla}_{V}Z,\,Z\rangle(p)\nonumber\\
&\quad+8\langle {\nabla}_{V}{\nabla}_{V}{\nabla}_{V}Z,\,{\nabla}_{V}Z\rangle(p)\nonumber\\
&\quad+6\langle {\nabla}_{V}{\nabla}_{V}Z,\,{\nabla}_{V}{\nabla}_{V}Z\rangle(p)\nonumber\\
=\,&6\langle {\nabla}_{V}{\nabla}_{V}Z,\,{\nabla}_{V}{\nabla}_{V}Z\rangle(p)\nonumber\\
=\,&6\langle \nabla^2_{V,V}Z+{\nabla}_{\nabla_VV}Z,\,\nabla^2_{V,V}Z+{\nabla}_{\nabla_VV}Z\rangle(p)\nonumber\\
=\,&6\langle \nabla^2_{V,V}Z,\,\nabla^2_{V,V}Z\rangle(p)\nonumber
\end{align}
where the second and the fourth equalities come from the fact that $Z(p)=0$ and $\nabla Z(p)=0$. Similarly, by the fact that $Z'(f_t(p))=0$ and $\nabla' Z'(f_t(p))=0$ we have
\begin{align}
&\frac{\ud^4}{\ud t^4}|_{t=0}\langle Z,Z\rangle(\gamma(t))\nonumber\\
=\,&\frac{\ud^4}{\ud t^4}|_{t=0}\langle Z',Z'\rangle(f_t\circ\gamma(t))\nonumber\\
=\,& 2\langle {\nabla'}_{{f_t}_*V}{\nabla'}_{{f_t}_*V}{\nabla'}_{{f_t}_*V}{\nabla'}_{{f_t}_*V}Z',\,Z'\rangle(f_t(p))\nonumber\\
&\quad+8\langle {\nabla'}_{{f_t}_*V}{\nabla'}_{{f_t}_*V}{\nabla'}_{{f_t}_*V}Z',\,{\nabla'}_{{f_t}_*V}Z'\rangle(f_t(p))\nonumber\\
&\quad+6\langle {\nabla'}_{{f_t}_*V}{\nabla'}_{{f_t}_*V}Z',\,{\nabla'}_{{f_t}_*V}{\nabla'}_{{f_t}_*V}Z'\rangle(f_t(p))\nonumber\\
=\,&6\langle {\nabla'}_{{f_t}_*V}{\nabla'}_{{f_t}_*V}Z',\,{\nabla'}_{{f_t}_*V}{\nabla'}_{{f_t}_*V}Z'\rangle(f_t(p))\nonumber\\
=\,&6\langle {\nabla'}^2_{{f_t}_*V,~{f_t}_*V}Z'+{\nabla'}_{\nabla'_{{f_t}_*V}{f_t}_*V}Z',\,{\nabla'}^2_{{f_t}_*V,{f_t}_*V}Z'+{\nabla'}_{\nabla_{{f_t}_*V}{f_t}_*V}Z'\rangle(f_t(p))\nonumber\\
=\,&6\langle {\nabla'}^2_{{f_t}_*V,{f_t}_*V}Z',\,{\nabla'}^2_{{f_t}_*V,{f_t}_*V}Z'\rangle(f_t(p))\nonumber.
\end{align}
Since $v$ is arbitrary, we have
\begin{equation}
\langle {\nabla}^2_{E_i,E_i}Z,~{\nabla}^2_{E_i,E_i}Z\rangle(p)=\langle {\nabla'}^2_{{f_t}_*E_i,{f_t}_*E_i}Z',~{\nabla'}^2_{{f_t}_*E_i,{f_t}_*E_i}Z'\rangle(f_t(p)).
\end{equation}
Thus, we have shown the claim (\ref{almostdone0}).

Next we claim that $a_1(p)=a_2(p)$. Take another smooth vector field $Y\in C^\infty(TM)$ so that 
\begin{equation}\label{Definition:Proof:Main:Y}
Y=\sum_{n=1}^\infty \gamma_nX_n
\end{equation} 
and $Y(p)\neq 0$ and construct $Y'\in C^\infty(TM)$ so that 
\begin{equation}\label{Definition:Proof:Main:Yp}
Y'=\sum_{n=1}^\infty \gamma_nX'_n. 
\end{equation}
Then by taking the curve $\gamma$ so that $\gamma'(0)=E_i$, where $i=1,\ldots,d$, we have
\begin{align}
&\frac{\ud^2}{\ud t^2}|_{t=0}\langle Z,Y\rangle(\gamma(t))\nonumber\\
=\,&\langle {\nabla}_{E_i}{\nabla}_{E_i}Z,Y\rangle(p)+2\langle {\nabla}_{E_i}Z,{\nabla}_{E_i}Y\rangle(p)+\langle Z,{\nabla}_{E_i}{\nabla}_{E_i}Y\rangle(p)\nonumber\\
=\,&\langle {\nabla}^2_{E_i,E_i}Z+{\nabla}_{\nabla_{E_i}E_i}Z,Y\rangle(p)\nonumber\\
=\,&\langle {\nabla}^2_{E_i,E_i}Z,\,Y\rangle(p)\label{almostdone1}
\end{align}
since $Z(p)=0$ and $\nabla Z(p)=0$.
On the other hand, by the same arguments as those for (\ref{almostdone1}), the construction of $Z'$ and $E_i'$, we have
\begin{align}
&\frac{\ud^2}{\ud t^2}|_{t=0}\langle Z,Y\rangle(\gamma(t))\nonumber\\
=\,&\frac{d^2}{\ud t^2}|_{t=0}\langle Z',Y'\rangle(f\circ \gamma(t))\nonumber\\
=\,& \langle {\nabla'}^2_{{f_t}_*E_i,{f_t}_*E_i}Z',Y'\rangle(f_t(p))\nonumber\\
=\,& a_i(p)^2\langle {\nabla'}^2_{E'_i,E'_i}Z',Y'\rangle(f_t(p))\label{almostdone2}
\end{align}
From (\ref{almostdone1}) and (\ref{almostdone2}), we have
\begin{equation}
\langle {\nabla}^2_{E_i,E_i}Z,Y\rangle(p)=a_i(p)^2\langle {\nabla'}^2_{E'_i,E'_i}Z',Y'\rangle(f_t(p)).
\end{equation}
Hence, by the assumption of $Z$ we have
\begin{align}
0&=\langle \Delta^{\texttt{TM}}Z,Y\rangle(p)=\langle {\nabla}^2_{E_1,E_1}Z+{\nabla}^2_{E_2,E_2}Z,Y\rangle(p)\nonumber\\
&=a_1(p)^2\langle {\nabla'}^2_{E'_1,E'_1}Z',Y'\rangle(f_t(p))+a_2(p)^2\langle {\nabla'}^2_{E'_2,E'_2}Z',Y'\rangle(f_t(p))
\end{align}
Since $Y'(f_t(p))$ is arbitrary, we know 
\begin{equation}\label{almostdone:1and2}
a_1(p)^2{\nabla'}^2_{E'_1,E'_1}Z'(f_t(p))+a_2(p)^2{\nabla'}^2_{E'_2,E'_2}Z'(f_t(p))=0.
\end{equation}
Combining with the fact that
\begin{equation}
{\nabla'}^2_{E'_1,E'_1}Z'(f_t(p))+{\nabla'}^2_{E'_2,E'_2}Z'(f_t(p))=0
\end{equation}
in (\ref{almostdone0}), we know $a_1(p)=a_2(p)$. 

By repeating the arguments from (\ref{almostdoneZ}) to (\ref{almostdone:1and2}), we conclude that 
\begin{equation}
a_1(p)=a_2(p)=\ldots=a_d(p)=:a(p).
\end{equation}
To finish the proof, we choose $W\in C^\infty(TM)$ so that $W=\sum_{l=1}^\infty \beta_lX_l$, $W(p)=0$, $\nabla W(p)=0$ and $\Delta^{\texttt{TM}}W(p)\neq 0$. Construct $W'=\sum_{l=1}^\infty\beta_lX'_l$. By the same argument, we know $W'(f_t(p))=0$ and $\nabla'W'(f_t(p))=0$. Take arbitrary vector fields $Y$ on $M$ and $Y'$ on $M'$ constructed by the same way as those in (\ref{Definition:Proof:Main:Y}) and (\ref{Definition:Proof:Main:Yp}). The same arguments for (\ref{almostdone1}) and (\ref{almostdone2}) hold for $W$; that is, when $\gamma(t)$ is a curve on $M$ so that $\gamma(0)=p$ and $\gamma'(0)=E_i(p)$ we have
\begin{eqnarray}
\frac{\ud^2}{\ud t^2}|_{t=0}\langle W,Y\rangle(\gamma(t))=\langle {\nabla}^2_{E_i,E_i}W,Y\rangle(p)
\end{eqnarray}
and
\begin{align}
&\frac{\ud^2}{\ud t^2}|_{t=0}\langle W,Y\rangle(\gamma(t))=\frac{\ud^2}{\ud t^2}|_{t=0}\langle W',Y'\rangle(f\circ \gamma(t))\nonumber\\
=\,& \langle {\nabla'}^2_{{f_t}_*E_i,{f_t}_*E_i}W',Y'\rangle(f_t(p))=a(p)^2\langle{\nabla'}^2_{E'_i,E'_i}W',Y'\rangle(f_t(p)).
\end{align}
Thus we have
\begin{align}
a(p)^2\langle {\Delta'}^{\texttt{TM}}W',Y'\rangle&=a(p)^2\langle\sum_{i=1}^d{\nabla'}^2_{E_i',E_i'}W',Y'\rangle\nonumber\\
&=\langle\sum_{i=1}^d\nabla^2_{E_i,E_i}W,Y\rangle=\langle \Delta^{\texttt{TM}}W,Y\rangle.
\end{align}
On the other hand, the following equality holds due to the definition of $W'$ and $Y'$:
\begin{align}
\langle {\Delta'}^{\texttt{TM}}W',Y'\rangle&=\langle \sum_{l=1}^\infty\beta_l\lambda'_lX'_l,~\sum_{k=1}^\infty\gamma_kX'_k\rangle= \sum_{l,k=1}^\infty \beta_l\lambda'_l\gamma_k\langle X'_l,X'_k\rangle\nonumber\\
&=\sum_{l,k=1}^\infty \beta_l\lambda_l\gamma_k\langle X_l,X_k\rangle=\langle \Delta^{\texttt{TM}}W,Y\rangle,
\end{align}
which gives us $a(p)=1$ and hence $f_t$ is an isometry. We have thus finished the proof.
\end{proof}

\section{Precompactness of $\MM_{d,k,D}$}\label{section:precompact}
By Theorem \ref{mainthm}, we have a distance, referred to as the VSD, in the space of the isometry classes in $\MM_{d,k,D}$. We finally can state the pre-compactness theorem. We need Lemma \ref{precompactlemma} to finish the proof.
\begin{thm}\label{Theorem:Precompactness}
For any $t>0$, the space of the isometry classes in $\MM_{d,k,D}$ is $d_t$-precompact.
\end{thm}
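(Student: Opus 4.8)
The plan is to realise each isometry class in $\MM_{d,k,D}$ as a single point of a precompact metric space and then to apply Lemma \ref{precompactlemma} twice. I would view the vector diffusion map of a manifold as the \emph{set} $\mathcal{V}_t(M):=\{V^a_t(M):a\in\mathcal{B}(M,g)\}$ of compact subsets of $\ell^2$; comparing the definition \eqref{VtM} of $d_t$ with the expression \eqref{hausdorffdef} for the Hausdorff distance shows that $d_t(M,M')=h_{\HD}(\mathcal{V}_t(M),\mathcal{V}_t(M'))$, where $h_{\HD}$ is the Hausdorff distance associated with the Hausdorff distance $\HD$ on the space of compact subsets of $\ell^2$. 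If all the $\mathcal{V}_t(M)$ lie in $\mathcal{F}(\mathcal{F}(K))$ for one fixed \emph{compact} $K\subset\ell^2$, then two applications of Lemma \ref{precompactlemma} make $(\mathcal{F}(\mathcal{F}(K)),h_{\HD})$ precompact, and since $d_t$ is a genuine distance on isometry classes by Theorem \ref{mainthm} and a metric subspace of a precompact space is precompact, the theorem follows.

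\emph{Step 1 (the uniform compact set).} Fix $t>0$ and some $\alpha>d$. For the total size, $\|V^a_t(x)\|_{\ell^2}^2=\Vol(M)^2\|k_{TM}(t,x,x)\|_{HS}^2$ by the identity following \eqref{KF}; combining the Kato-type pointwise bound $\|k_{TM}(t,x,x)\|_{HS}\le k_M(t,x,x)$ with Theorem \ref{thm1} gives $\|V^a_t(x)\|_{\ell^2}\le Z_{S^d(1)}(t/R^2)$, where (as in the proof of Lemma \ref{evlemma}) $R$ is bounded by a constant depending only on $d,k,D$; this is a uniform $\ell^2$-bound. For the tails, by Lemma \ref{evlemma}(a) one has $\lambda_n+\lambda_m\ge A(d,k,D)(\max\{n,m\})^{2/d}$, so on $\{\max\{n,m\}>N\}$ one has $1\le(\lambda_n+\lambda_m)^\alpha(AN^{2/d})^{-\alpha}$ and hence, by Lemma \ref{evlemma}(c),
\[
\sum_{\max\{n,m\}>N}\Vol(M)^2 e^{-(\lambda_n+\lambda_m)t}\langle X_n(x),X_m(x)\rangle^2\ \le\ (AN^{2/d})^{-\alpha}\,E(d,k,D)\,F(\alpha,d)\,t^{-\alpha-d}\ =:\ g(N),
\]
and $g(N)\to0$ uniformly in $M,a,x$. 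Thus every coordinate of $V^a_t(x)$ with $\max\{n,m\}=\ell\ge2$ has absolute value at most $g(\ell-1)^{1/2}$, while the $(1,1)$-coordinate is at most $Z_{S^d(1)}(t/R^2)$. I would take $K$ to be the ``Hilbert brick'' of all $(c_{n,m})_{n,m\ge1}$ with $|c_{1,1}|\le Z_{S^d(1)}(t/R^2)$ and $|c_{n,m}|\le g(\max\{n,m\}-1)^{1/2}$ for $\max\{n,m\}\ge2$; since $g(\ell-1)\asymp\ell^{-2\alpha/d}$ and $\alpha>d$, the sum of squares of the side lengths, of order $\sum_{\ell}(2\ell-1)\ell^{-2\alpha/d}$, converges, so $K$ is a compact subset of $\ell^2$ and $V^a_t(M)\subset K$ for every $(M,g)\in\MM_{d,k,D}$, every $a\in\mathcal{B}(M,g)$.

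\emph{Step 2 (iterating and concluding).} A compact metric space is precompact, so Lemma \ref{precompactlemma} applied to $(K,\|\cdot\|_{\ell^2})$ shows $(\mathcal{F}(K),\HD)$ is precompact (the nonempty closed subsets of $K$ being precisely the compact ones), and a second application shows $(\mathcal{F}(\mathcal{F}(K)),h_{\HD})$ is precompact. I would then check $\mathcal{V}_t(M)\in\mathcal{F}(\mathcal{F}(K))$: Lemma \ref{continuitylemma} with $s=t$ and $y=x$ gives $\|V^a_t(x)-V^b_t(x)\|_{\ell^2}^2\le2\Vol(M)^2 d_{\mathcal{B}(M,g)}(a,b)\,k^{(N)}_{TM}(t,x,x)$ with $k^{(N)}_{TM}(t,x,x)$ bounded uniformly in $x$ by Lemma \ref{evlemma}(c) and \eqref{defkN}, so $a\mapsto V^a_t(M)$ is continuous from the compact $\mathcal{B}(M,g)$ into $(\mathcal{F}(K),\HD)$ and $\mathcal{V}_t(M)$ is compact. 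The isometry class of $M$ determines $\mathcal{V}_t(M)$ (an isometry $M\to M'$ carries $\mathcal{B}(M,g)$ onto $\mathcal{B}(M',g')$ and preserves each image set), and $d_t(M,M')=h_{\HD}(\mathcal{V}_t(M),\mathcal{V}_t(M'))$ by \eqref{VtM} and \eqref{hausdorffdef}; since $d_t$ is a distance on isometry classes by Theorem \ref{mainthm}, the map $[M]\mapsto\mathcal{V}_t(M)$ is a distance-preserving embedding of the space of isometry classes in $\MM_{d,k,D}$ into the precompact $(\mathcal{F}(\mathcal{F}(K)),h_{\HD})$, whence that space is $d_t$-precompact.

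\emph{Main obstacle.} The substantive step is Step 1: exhibiting one compact $K\subset\ell^2$ serving all of $\MM_{d,k,D}$ simultaneously. This is exactly where the uniform estimates are needed — Lemma \ref{evlemma}(a) to locate the coordinates, Lemma \ref{evlemma}(c) \emph{with $\alpha>d$} to make the coordinatewise bounds square-summable so that $K$ is genuinely compact and not merely bounded, and Theorem \ref{thm1} together with the Kato-type inequality to control $\Vol(M)^2\|k_{TM}(t,x,x)\|^2_{HS}$ uniformly over the class. Everything afterward is formal: two invocations of Lemma \ref{precompactlemma}, the continuity in Lemma \ref{continuitylemma} to see $\mathcal{V}_t(M)$ is compact, and the identification of $d_t$ with the iterated Hausdorff distance.
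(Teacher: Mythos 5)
Your proposal is correct and follows the same architecture as the paper's proof: exhibit one compact set $K\subset\ell^2$ containing every image $V^a_t(x)$ uniformly over $\MM_{d,k,D}$, apply Lemma \ref{precompactlemma} twice, use Lemma \ref{continuitylemma} together with the compactness of $\mathcal{B}(M,g)$ to see that the collection $\{V^a_t(M)\}_{a\in\mathcal{B}(M,g)}$ is a nonempty closed (indeed compact) element of the iterated hyperspace, and identify $d_t$ from (\ref{VtM}) with the iterated Hausdorff distance via (\ref{hausdorffdef}), so that precompactness is inherited by the space of isometry classes. The only substantive deviation is Step 1: the paper bounds the weighted norm $\|V^a_t(x)\|_{h^1}$ uniformly (using Lemma \ref{evlemma}(a) and (c) with $\alpha\in\{0,1\}$) and invokes Rellich's theorem for the compact embedding of the weighted sequence space into $\ell^2$, whereas you construct a compact ``Hilbert brick'' directly from coordinatewise tail bounds, using Lemma \ref{evlemma}(a) to place $\lambda_n+\lambda_m\gtrsim(\max\{n,m\})^{2/d}$ and Lemma \ref{evlemma}(c) with a single $\alpha>d$ to make the envelope square-summable; the uniform $\ell^2$-bound comes, as in the paper, from $\|k_{TM}(t,x,x)\|_{HS}\leq k_M(t,x,x)$ and Theorem \ref{thm1}. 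Both routes are valid: yours is more elementary and self-contained (no compact-embedding theorem, total boundedness is visible by hand), at the price of the larger exponent $\alpha>d$, while the paper's $h^1$-plus-Rellich argument is shorter. Your use of $\mathcal{F}(\mathcal{F}(K))$ in place of the paper's intermediate set $E\subset\mathcal{F}(K)$, and your explicit remark that isometric manifolds give the same point of the hyperspace, are only repackagings of the same final step.
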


\begin{proof}
Denote $h^1$ to be the Hilbert space consisting of infinite sequences $\{x_i\}_{i=1,2,\ldots}$ so that $\|\{x_i\}_{i=1,2,\ldots}\|_{h^1}:=\sum_{i=1}^\infty(1+i^{2/d}) |x_i|^2<\infty$. Consider a one-to-one map $\NN\times\NN \to \NN$ which maps $(i,j)$ to $\frac{(i+j-1)(i+j-2)}{2}+j$. Thus we could view the double indexed sequence $\{e^{-(\lambda_i+\lambda_j) t}\langle X_i(x),X_j(x)\rangle^2\}_{i,j=1}^\infty$ as an infinite sequence.
Fix $t>0$. For any $ M\in \MM_{d,k,D}$, $a\in\mathcal{B}(M,g)$ and $x\in M$, we have
\begin{align}
\|V^a_t(x)\|^2_{h^{1}}&= \Vol(M)^2\sum_{i,j\geq 1} \left[1+\Big(\frac{(i+j-1)(i+j-2)}{2}+j\Big)^{2/d}\right]e^{-(\lambda_i+\lambda_j) t}\langle X_i(x),X_j(x)\rangle^2\nonumber\\
&\leq  \Vol(M)^2\sum_{i,j\geq 1} (1+(i+j)^{4/d})e^{-(\lambda_i+\lambda_j) t}\langle X_i(x),X_j(x)\rangle^2\nonumber\\
&\leq  \frac{\Vol(M)^2}{A(d,k,D)^2}\sum_{i,j\geq 1} (A(d,k,D)^2+c(d)(\lambda_i+\lambda_j)^2)e^{-(\lambda_i+\lambda_j) t}\langle X_i(x),X_j(x)\rangle^2\nonumber\\
&\leq  E(d,k,D)F(0,d)t^{-d}+\frac{c(d)E(d,k,D)}{A(d,k,D)^2}F(2,d)t^{-2-d}\nonumber,
\end{align}
where $c(d)$ is a universal constant depending on $d$, the first inequality holds by a simple bound $\frac{(i+j-1)(i+j-2)}{2}+j\leq (i+j)^2$, the second inequality follows from Lemma \ref{evlemma}(a) and the third inequality follows from Lemma \ref{evlemma}(c). Since $c(d)$, $A(d,k,D)$, $E(d,k,D)$, $F(0,d)$ and $F(2,d)$ are universal constants, we know that the set
\begin{equation}
K_0:=\{V^a_t(x)\}_{x\in M, M\in \MM_{d,k,D}, a\in\mathcal{B}(M,g)}\subset h^{1}
\end{equation}
is bounded in $h^{1}\subset \ell^2$, which is hence relative compact inside $\ell^2$ by Rellich's Theorem \cite{Bisgard:2012}. Denote the closure of $K_0$ in $\ell^2$ by $K$. Denote the set of all non-empty closed subsets of $K$ by $\mathcal{F}(K)$, equipped with the Hausdorff distance $\HD$ associated with the canonical metric on $\ell^2$. By Lemma \ref{precompactlemma}, the metric space $(\mathcal{F}(K),\HD)$ is precompact.

By Theorem \ref{Vtembedding}, since $M\in \MM_{d,k,D}$ is compact, $V^a_t(M)$ is compact inside $\ell^2$ for any $a\in\mathcal{B}(M,g)$. Hence $V^a_t(M)\in \mathcal{F}(K)$. Consider a subset $E$ of $\mathcal{F}(K)$ consisting of $V^a_t(M)$, where $M\in \MM_{d,k,D}, a\in\mathcal{B}(M,g)$, that is,
\begin{equation}
E=\{V^a_t(M)\}_{M\in \MM_{d,k,D}, a\in\mathcal{B}(M,g)}\subset \mathcal{F}(K).
\end{equation}
Note that $E$ is precompact with related to the distance $\HD$ since a subset of a compact set is precompact. Given $M\in \MM_{d,k,D}$, define a subset $V_t(M)$ of $E$ consisting of $V_t^a(M)$ for all $a\in \mathcal{B}(M,g)$, that is,
\begin{equation}\label{vtset}
V_t(M):=\{V_t^a(M)\}_{a\in\mathcal{B}(M,g)}\subset E.
\end{equation}
Here we view $V_t^a(M)$ as a point in the set $E$. By Lemma \ref{continuitylemma}, $V_t(M)$ is a closed subset of $E$ with related to the Hausdorff distance $\HD$. Indeed, by the closeness of $\mathcal{B}(M,g)$ and Lemma \ref{continuitylemma} we have
\begin{equation}
\HD(V_t^a(M),V_t^b(M))\leq 2\Vol(M)^2d_{\mathcal{B}(M,g)}(a,b)\sup_{x\in M}|K_{TM}^{(N)}(t,x,x)|,
\end{equation}
which implies the closeness of $V_t(M)$.
Then, consider $\mathcal{F}(E)$ the set of non-empty closed subsets of $E$, equipped with the Hausdorff distance $h_{\HD}$ associated with the distance $\HD$. By Lemma \ref{precompactlemma} again, we conclude that $\mathcal{F}(E)$ is precompact with related to the distance $h_{\HD}$. Finally, the set $\{V_t(M)\}_{M\in \MM_{d,k,D}}$, which is a subset of $\mathcal{F}(E)$, is precompact with related to the Hausdorff distance $h_{\HD}$. 

Notice that by the definition of the Hausdorff distance in (\ref{hausdorffdef}) and Theorem \ref{mainthm}, (\ref{VtM}) is nothing but the Hausdorff distance $h_{\HD}$, that is, 
\begin{equation}
d_t(M, M')=d_{\HD}(V_t(M),V_t(M')),
\end{equation}
and we conclude the proof.

\end{proof}

\section{Manifold Learning}\label{section:manifoldLearning}

Over the last couple of decades, explosive technological advances lead to exponential growth of massive datasets in almost all fields. In addition to the big size, the data is often high dimensional and difficult to quantify its hidden structure. In order to deal with this kind of data, several new techniques and algorithms were proposed in the applied mathematics and statistics literature.  
The geometric object, in particular the differentiable manifold, is commonly considered as a candidate model to capture or approximate the nonlinear structure inside the massive dataset, and several techniques based on spectral graph theory \cite{Fan:1996} are extensively studied in order to capture this structure \cite{belkin_niyogi:2003,Coifman20065}. Here we summarize the algorithm and the relevant theory. 

Suppose we have a dataset $\mathcal{X}=\{x_i\}_{i=1}^n$ independently and uniformly sampled from a random variable. The first challenge to data scientists is finding a suitable metric to quantify the relationship among data points. Consider the special case that $\mathcal{X}$ is located on a $d$-dim closed and smooth manifold $M$ embedded in $\RR^p$. Clearly we may use the canonical distance of $\RR^p$ to study the relationship. Once we have the pairwise relationship among data points, we are able to build up an affinity graph and study the dataset by studying its associated graph Laplacian. Precisely, set the vertices $V=\mathcal{X}$, the edges $E=\{(x_i,x_j)\}$ and the affinity function $\mathsf{w}:E\to \RR^+$ by $\mathsf{w}(x_i,x_j):=e^{-\|x_i-x_j\|^2_{\RR^p}/\epsilon}$, where $\epsilon>0$ is chosen by the user. Here $\mathsf{w}$ could be defined with other suitable kernels, but we present the algorithm with the Gaussian kernel for the simplicity. Second, build up  a $n\times n$ matrix $W$ and a $n\times n$ diagonal matrix $D$ by
\begin{equation}
W_{ij}=\mathsf{w}(x_i,x_j) ,\quad D_{ii}=\sum_{j=1}^nW_{ij},  
\end{equation}
and then construct the graph Laplacian by $L=I_n-D^{-1}W$, where $I_n$ is a $n\times n$ identity matrix. Clearly the constructed graph is undirected and $W$ is symmetric. It has been well studied that as $n\to \infty$ and $\epsilon\to 0$ depending on $n$, the graph Laplacian converges to the Laplace-Beltrami operator in the spectral sense in probability \cite{belkin_niyogi:2007,singer_wu:2013}. If we have further information about the parallel transport among two close points, then we can further build up the {\it connection graph}, which consists of the affinity graph discussed above and a connection function $\mathsf{g}: E\to O(d)$, where 
\begin{equation}
\mathsf{g}(x_i,x_j):=b_i^TP_{x_i,x_j}b_j,
\end{equation}
$P_{x_i,x_j}$ is the parallel transport from $x_j$ to $x_i$, and $b_i:\RR^d\to T_{x_i}M$ can be viewed as a chosen basis of the tangent plane $T_{x_i}M$. In this case, we can establish the GCL in the following way. Recall that GCL stands for graph connection Laplacian. First, build up a $n\times n$ block matrix $\mathsf{S}$ and a $n\times n$ block diagonal matrix $\mathsf{D}$ with $d\times d$ blocks: 
\begin{equation}
\mathsf{S}_{ij}=\mathsf{w}(x_i,x_j)\mathsf{g}(x_i,x_j)\in \RR^{d\times d},\quad \mathsf{D}_{ii}=\sum_{j=1}^n\mathsf{w}(x_i,x_j)I_d\in \RR^{d\times d},
\end{equation}
and hence the GCL by $\mathsf{C}=I_{nd}-\mathsf{D}^{-1}\mathsf{S}$. By the construction of $\mathsf{g}$, we know $\mathsf{S}$ is symmetric.
As is shown in \cite{singer_wu:2013}, under this setup, as $n\to \infty$ and $\epsilon\to 0$ depending on $n$, $\mathsf{C}$ converges to the connection Laplacian in the spectral sense in probability. Furthermore, in \cite[Theorem B1 and Theorem B2]{vdm}, it has been shown that via the techniques including local principal component analysis and rotational alignment, we are able to estimate the parallel transport between two close points directly from the point clouds. In other words, even if the parallel transport information among two close points is missing, we are able to extract it directly from the point clouds. We refer the reader to \cite{Coifman20065,vdm,singer_wu:2013,ElKaroui:2010a,ElKaroui_Wu:2014} for more mathematical details as well as their practical algorithms. Note that all the above discussions are limited to one manifold.

Now one consider a different scenario -- what happens if the dataset $\mathcal{X}$ is more abstract than a subset of vectors? Take a set of $2$-dim shapes embedded in $\RR^3$ for example. In this setup, there has been several metrics proposed in the literature to compare two different shapes, for example \cite{Chazal_Cohen_Guibas_Memoli_Oudot:2009,Sun_Ovsjanikov_Guibas:2009,Memoli:2011} and the abundant literature summarized there. In general, the idea is that we view a shape as a manifold and the discretization of the shape as a set of point cloud sampled from the shape, and study its graph Laplacian or GCL. As is proved in \cite{berard} and this paper, if all the shapes in $\mathcal{X}$ are of fixed dimension, bounded diameter and Ricci curvature, that is, $\mathcal{X}\subset\mathcal{M}_{d,k,D}$, then the SD and VSD among these shapes are indeed metrics distinguishing isometric classes inside $\mathcal{X}$. We can then take these metrics to define the affinity function $\mathsf{w}$ in the affinity graph with $V=\mathcal{X}$ and $E=\{(x_i,x_j)\}$, and study the dataset by the graph Laplacian or other relevant techniques. 

One immediate question we might ask when we take a new metric into account in the analysis -- what is the property of $\mathcal{X}$ under the SD or VSD? The pre-compactness theorem proved in \cite{berard} and this paper provide a partial answer. Indeed, we know that even these shapes might have quite diverse geometric and topological profiles, under the SD or VSD the relationship among these manifolds is fairly restricted. Furthermore, since $\mathcal{M}_{d,k,D}$ is pre-compact under these metrics, we have control over several graph quantities. For example, if the constructed affinity graph is connected, we know the graph diameter is finite, and hence the first non-trivial eigenvalue is also non-trivial \cite[Lemma 1.9]{Fan:1996}. A further study about the pre-compactness property and its application to the data analysis will be reported in the future work. Also, the main difference between the VSD and SD deserves a further study. In particular, since the connection Laplacian is intimately related to the Hodge Laplacian, the topological properties might be reflected in the VSD.

\section{Acknowledgements}
The author acknowledges the support partially by FHWA grant DTFH61-08-C-00028, partially by Award Number FA9550-09-1-0551 from AFOSR and partially by Connaught New Researcher grant 498992. He acknowledges Professor Charlie Fefferman and Professor Amit Singer for their time and inspiring and helpful discussions; in particular Professor Amit Singer, who introduced him the massive data analysis field. He also acknowledges the valuable discussion with Professor G\'{e}rard Besson, Professor Richard Bamler and Dr. Chen-Yun Lin. The author also thanks the anonymous reviewer's constructive and helpful comments.

\bibliographystyle{plain}
\bibliography{connLap}

\begin{thebibliography}{10}

\bibitem{Alexeev_Bandeira_Fickus_Mixon:2013}
B.~Alexeev, A.~S. Bandeira, M.~Fickus, and D.~G. Mixon.
\newblock Phase retrieval with polarization.
\newblock {\em SIAM J. Imaging Sci.}, 7(1):35--66, 2013.

\bibitem{Bates:2014}
J.~Bates.
\newblock The embedding dimension of laplacian eigenfunction maps.
\newblock {\em Appl. Comput. Harmon. Anal.}, 37(3):516--530, 2014.

\bibitem{belkin_niyogi:2003}
M.~Belkin and P.~Niyogi.
\newblock {Laplacian Eigenmaps for Dimensionality Reduction and Data
  Representation}.
\newblock {\em Neural. Comput.}, 15(6):1373--1396, June 2003.

\bibitem{belkin_niyogi:2007}
M.~Belkin and P.~Niyogi.
\newblock {Convergence of {L}aplacian eigenmaps}.
\newblock In {\em Adv. Neur. In.: Proceedings of the 2006 Conference},
  volume~19, page 129. The MIT Press, 2007.

\bibitem{berard2}
P.~B\'{e}rard.
\newblock {\em Spectral Geometry: Direct and Inverse Problems}.
\newblock Springer, 1986.

\bibitem{berard}
P.~B\'{e}rard, G.~Besson, and S.~Gallot.
\newblock Embedding riemannian manifolds by their heat kernel.
\newblock {\em Geom. Funct. Anal.}, 4(4):373, 1994.

\bibitem{Bisgard:2012}
J.~Bisgard.
\newblock A compact embedding for sequence spaces.
\newblock {\em Missouri J. Math. Sci.}, 24(2):182--189, 2012.

\bibitem{Chazal_Cohen_Guibas_Memoli_Oudot:2009}
F.~Chazal, D.~Cohen-Steiner, L.~J. Guibas, F.~M{\'e}moli, and S.~Y. Oudot.
\newblock Gromov-hausdorff stable signatures for shapes using persistence.
\newblock In {\em Proceedings of the Symposium on Geometry Processing}, SGP
  '09, pages 1393--1403. Eurographics Association, 2009.

\bibitem{Fan:1996}
F.~Chung.
\newblock {\em Spectral Graph Theory}.
\newblock American Mathematical Society, 1996.

\bibitem{Coifman20065}
R.~R. Coifman and S.~Lafon.
\newblock Diffusion maps.
\newblock {\em Appl. Comput. Harmon. Anal.}, 21(1):5--30, 2006.

\bibitem{Cucuringu_Lipman_Singer:2012}
M.~Cucuringu, Y.~Lipman, and A.~Singer.
\newblock Sensor network localization by eigenvector synchronization over the
  euclidean group.
\newblock {\em ACM Transactions on Sensor Networks}, 8(3):19:1--19:42, 2012.

\bibitem{Cucuringu_Singer_Cowburn:2012}
M.~Cucuringu, A.~Singer, and D.~Cowburn.
\newblock Eigenvector synchronization, graph rigidity and the molecule problem.
\newblock {\em Information and Inference: A Journal of the IMA}, 1:21--67,
  2012.

\bibitem{ElKaroui:2010a}
N.~{El Karoui}.
\newblock {On information plus noise kernel random matrices}.
\newblock {\em Ann. Stat.}, 38(5):3191--3216, 2010.

\bibitem{ElKaroui_Wu:2014}
N.~{El Karoui} and H.-T. {Wu}.
\newblock {Connection graph Laplacian methods can be made robust to noise}.
\newblock {\em Ann. Stat.}, 44(1):346--372, 2015.

\bibitem{gilkey}
P.~Gilkey.
\newblock {\em The Index Theorem and the Heat Equation}.
\newblock Princeton, 1974.

\bibitem{hadanisinger2011}
R.~Hadani and A.~Singer.
\newblock Representation theoretic patterns in three dimensional cryo-electron
  microscopy {II} -- the class averaging problem.
\newblock {\em Found. Comput. Math.}, 11(5):589--616, 2011.

\bibitem{jones2007}
P.~Jones, M.~Maggioni, and R.~Schul.
\newblock Manifold parametrizations by eigenfunctions of the laplacian and heat
  kernels.
\newblock {\em P. Natl. Acad. Sci. USA}, 105(6):1803--1808, 2008.

\bibitem{Jones_Maggioni_Schul:2010}
P.~Jones, M.~Maggioni, and R.~Schul.
\newblock Manifold parametrizations by eigenfunctions of the laplacian and heat
  kernels.
\newblock {\em Annales Academi¾ Scientiarum Fennice Mathematica}, 35:131--174,
  2010.

\bibitem{Marchesini_Tu_Wu:2014}
S.~{Marchesini}, Y.-C. {Tu}, and H.-T. {Wu}.
\newblock {Alternating Projection, Ptychographic Imaging and Phase
  Synchronization}.
\newblock {\em Appl. Comput. Harmon. Anal.}, 2015.
\newblock doi:10.1016/j.acha.2015.06.005.

\bibitem{Memoli:2011}
F.~M\'emoli.
\newblock A spectral notion of gromov-wasserstein distance and related methods.
\newblock {\em Appl. Comput. Harmon. Anal.}, 30(3):363 -- 401, 2011.

\bibitem{Portegies:2013}
J.~W. {Portegies}.
\newblock {Embeddings of Riemannian manifolds with heat kernels and
  eigenfunctions}.
\newblock {\em Comm. Pure Appl. Math.}, 2015.
\newblock arXiv:1311.7568 [math.DG].

\bibitem{singer_wu:2011}
A.~Singer and H.-T. Wu.
\newblock Orientability and diffusion map.
\newblock {\em Appl. Comput. Harmon. Anal.}, 31(1):44--58, 2011.

\bibitem{vdm}
A.~Singer and H.-T. Wu.
\newblock Vector diffusion maps and the connection laplacian.
\newblock {\em Comm. Pure Appl. Math.}, 65(8):1067--1144, 2012.

\bibitem{singer_wu:2013}
A.~Singer and H.-T. Wu.
\newblock Spectral convergence of the connection laplacian from random samples.
\newblock {\em Information and Inference: A Journal of the IMA}, 2016.
\newblock In press. arXiv:1306.1587 [math.NA].

\bibitem{vdmapp2011}
A.~Singer, Z.~Zhao, Y.~Shkolnisky, and R.~Hadani.
\newblock Viewing angle classification of cryo-electron microscopy images using
  eigenvectors.
\newblock {\em SIAM J. Imaging Sci.}, 4(2):543--572, 2011.

\bibitem{Sun_Ovsjanikov_Guibas:2009}
J.~Sun, M.~Ovsjanikov, and L.~Guibas.
\newblock A concise and provably informative multi-scale signature based on
  heat diffusion.
\newblock In {\em Proceedings of the Symposium on Geometry Processing}, SGP
  '09, pages 1383--1392. Eurographics Association, 2009.

\bibitem{Zhao_Singer:2014}
Z.~Zhao and A.~Singer.
\newblock Rotationally invariant image representation for viewing direction
  classification in cryo-em.
\newblock {\em Journal of Structural Biology}, 186(1):153--166, 2014.

\end{thebibliography}
\end{document}